\newtheorem{thm}{Theorem}[section]
\newtheorem{lemma}[thm]{Lemma}
\newtheorem{remark}[thm]{Remark}
\newtheorem{corollary}[thm]{Corollary}
\theoremstyle{definition}
\newcommand{\abs}[1]{ \left\lvert#1\right\rvert} 
\newcommand{\norm}[1]{\left\lVert#1\right\rVert} 
\numberwithin{equation}{section}
\title[Pulsatile flow in viscoelastic vessels]
{Well-posedness, ill-posedness, and traveling waves for 
models of pulsatile flow in viscoelastic vessels}
\author{Hyeju Kim}
\address{Department of Mathematics, Drexel University, Philadelphia, PA 19104, USA}
\email{hk655@drexel.edu}
\author{David M. Ambrose}
\address{Department of Mathematics, Drexel University, Philadelphia, PA 19104, USA}
\email{dma68@drexel.edu}
\begin{document}

\begin{abstract} We study dispersive models of fluid flow in viscoelastic vessels, derived in the study of blood flow.  
The unknowns in the models are the velocity of
the fluid in the axial direction and the displacement of the vessel wall from rest.  We prove that one such model has a well-posed
initial value problem, while we argue that a related model instead has an ill-posed initial value problem; in the second case,
we still prove the existence of solutions in analytic function spaces.  Finally we prove the existence of some periodic traveling
waves.
\end{abstract}

\maketitle

\section{Introduction}

We consider a fluid-structure interaction problem with a fluid flowing within a viscoelastic vessel, motivated by hemodynamics.  
The specific models to be studied have been derived in
\cite{mitsotakis1}, based on the prior work \cite{mitsotakis2}.  As shown in Figure \ref{fig:vessel}, we consider an axisymmetric flow.
The model equations begin from the Navier-Stokes equations for incompressible flow, making a number of assumptions, such
as laminar flow with small viscosity.  

 \begin{figure}[htp]
    \centering
    \includegraphics[width=10cm]{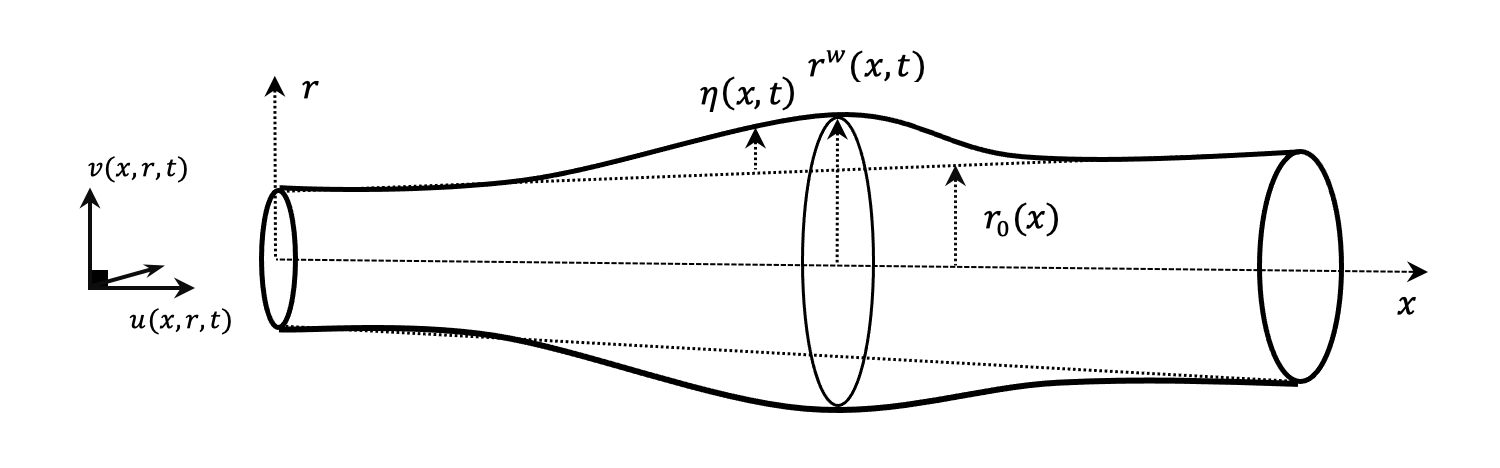}
    \caption{Sketch of a vessel segment.} 
    \label{fig:vessel}
\end{figure}

The vessel containing the fluid is taken to have a given undisturbed radius $r_{0}(x),$ and we study the displacement, $\eta(x,t),$
of this; we call the total radius of the vessel, then, $r^{w}(x,t)=r_{0}(x)+\eta(x,t).$  The horizontal component of the fluid velocity
is $u(x,t);$ this is taken to be the horizontal velocity at a particular distance between the centerline of the vessel and the outer
wall.  A classical Boussinesq system of equations is derived, making assumptions on the scaling of the various velocities; this system
is
\begin{equation}\label{originalEtaEquation}
\eta_{t}+\frac{1}{2}(r_{0}+\eta)u_{x}+(r_{0}+\eta)_{x}u=0,
\end{equation}
\begin{multline}\label{originalUEquation}
[1-\bar{\alpha}r_{0xx}]u_{t}+(\bar{\beta}\eta)_{x}+uu_{x}-\frac{(4\bar{\alpha}+r_{0})r_{0}}{8}u_{xxt}
+\frac{(3\bar{\alpha}+r_{0})r_{0x}}{2}(\bar{\beta}\eta)_{xx}
\\
+\kappa u-\gamma\left(\bar{\beta}(r_{0x}u+\frac{r_{0}}{2}u_{x})_{x}\right)
=0.
\end{multline}
There are a number of parameters here which must be described.  First, $\rho$ is the density of the fluid while $\rho^{w}$ is
the density of the wall material, and $h$ is the thickness of the wall.  These are combined in the parameter 
$\bar{\alpha}=\frac{\rho^{w}h}{\rho},$ measuring the relative densities of the wall and the fluid.  The parameter $E$ measures
elasticity of the wall, and then $\bar{\beta}$ (which is a function of $x$ rather than being constant), is given by 
$\bar{\beta}(x)=\frac{Eh}{\rho r_{0}^{2}(x)}.$    The parameters $\kappa$ and $\gamma$ are both viscosities, with $\kappa$
being the fluid viscous frequency parameter (i.e., the Rayleigh damping coefficient).  We have said that the wall of the vessel
is taken to be viscoelastic, and $\gamma$ measures the viscous properties of the wall.

Prior models have considered the vessel wall to be elastic, rather than viscoelastic \cite{cascaval}, \cite{mitsotakis2}.  
However, accurate modeling of the anatomy of blood vessels requires the more detailed (viscoelastic) description.
Specifically, as described in \cite{bertagliaCaleffiValiani}, there are three layers of a blood vessel, the tunica intima (inner layer),
tunica media (middle layer), and tunica externa (outer layer), and the smooth muscle cells in the tunica media exhibit viscoelastic
properties \cite{banks}.  Furthermore, in some regimes, these viscoelastic properties are dominant as compared to purely
elastic effects \cite{jengmath}.

In the case that $r_{0}$ is constant, the model 
simplifies considerably; notice that not only do derivatives of $r_{0}$ now vanish, but also
$\bar{\beta}$ becomes constant so that its derivatives now also vanish.  The result is
\begin{equation}\label{etaEquation}
\eta_{t}+\frac{1}{2}(r_{0}+\eta)u_{x}+\eta_{x}u=0,
\end{equation}
\begin{equation}\label{uEquation}
u_{t}+\bar{\beta}\eta_{x}+uu_{x}-\frac{(4\bar{\alpha}+r_{0})r_{0}}{8}u_{xxt}
+\kappa u-\gamma\left(\bar{\beta}\frac{r_{0}}{2}u_{xx}\right)
=0.
\end{equation}

We prove three main results in the present work.  First, for the model \eqref{etaEquation}, \eqref{uEquation} with constant $r_{0},$
we demonstrate well-posedness of the initial value problem in Sobolev spaces.  Notably, by contrast, we provide evidence that
the more general model \eqref{originalEtaEquation}, \eqref{originalUEquation} instead has an ill-posed initial value problem.
That an initial value problem is ill-posed does not imply that there are no solutions, however.  An example of this is the classical
vortex sheet initial value problem, which is known to be ill-posed in Sobolev spaces \cite{caflischOrellana}.
Existence of solutions for the vortex sheet problem may be established in analytic function spaces \cite{duchonRobert}, 
\cite{sulemSulemBardosFrisch}.  Similarly to \cite{sulemSulemBardosFrisch}, we prove existence of solutions for 
the initial value problem for \eqref{originalEtaEquation}, \eqref{originalUEquation} in analytic spaces based on the Wiener algebra,
making use of an abstract Cauchy-Kowalevski theorem \cite{kanoNishida}.
The interested reader might also see \cite{rafaelJon}, \cite{rafaelOddViscosity} for other examples of model equations in free-surface
fluid dynamics for which solutions have been proved to exist in analytic function spaces, when the well-posedness in spaces
of finite regularity is in question.

The model \eqref{originalEtaEquation}, \eqref{originalUEquation} is bidirectional, in that waves may propagate either to the left 
or the right.  The authors of \cite{mitsotakis1} also derive unidirectional models, related to the Korteweg-de Vries equation
and the Benjamin-Bona-Mahony equation.  These models are simpler, and reduce to a single equation for $\eta.$
We consider the contrast in the bidirectional case between well-posedness when $r_{0}$ is constant and likely ill-posedeness
when $r_{0}$ is non-constant to be an interesting feature of the present work; this constrast is not present in the unidirectional
models, as (relying on results such as those of \cite{akhunov}, \cite{nonl4}, \cite{IUMJ3}, or \cite{craig}) the unidirectional models
can be shown to be well-posed in either case.  As the bidirectional models are therefore more interesting, we restrict our
studies to them.

In addition to developing the models we study here, the authors of \cite{mitsotakis1} also studied properties of traveling waves,
including the case $\gamma=\kappa=0.$   For our third main result, then, 
we prove existence of such waves.  Specifically, we prove existence of 
periodic traveling waves of the system \eqref{etaEquation}, \eqref{uEquation}
in the case that $\gamma=\kappa=0.$  This is the doubly inviscid case, meaning that for the existence of traveling waves,
we neglect the viscous properties of the fluid and of the vessel wall.  We prove this by a ``bifurcation from a simple eigenvalue''
method, after studying the kernel of the linearized operator associated to \eqref{etaEquation}, \eqref{uEquation}.  In general
this operator has a two-dimensional kernel, but when $\gamma=\kappa=0,$ we may enforce symmetry, reducing the dimension 
of the kernel to one.  Analytical studies of the traveling waves in the more general case, with the two-dimensional kernel, 
will be the subject of future work.

The plan of the paper is as follows.  In Section \ref{wellPosednessSection} we prove well-posedness in Sobolev spaces 
of the initial value problem for the system \eqref{etaEquation}, \eqref{uEquation}; the main theorems of this section are
Theorem \ref{existenceTheorem} demonstrating existence, and Theorem \ref{CDOICTheorem} demonstrating uniqueness
and continuous dependence on the data.  In Section \ref{ACKSection} we 
give a calculation suggesting ill-posedness of the more general system \eqref{originalEtaEquation}, \eqref{originalUEquation},
and then prove existence of solutions for this system in analytic function spaces by application of an abstract Cauchy-Kowalevski
theorem.  The main theorem of Section \ref{ACKSection} is Theorem \ref{secondMain}.  In Section \ref{travelingSection} 
we prove existence of periodic traveling waves for the system \eqref{etaEquation}, \eqref{uEquation}
when $\kappa=\gamma=0;$ this is the content of Theorem \ref{travelingWaveExistenceTheorem}.  
We make some concluding remarks in Section \ref{discussionSection}.

\section{Well-Posedness in Sobolev Spaces when $r(x)$ is constant} \label{wellPosednessSection}

In this section we use the energy method to prove well-posedness in Sobolev spaces of the
spatially periodic initial value problem for the
system \eqref{etaEquation}, \eqref{uEquation}.  We argue along the same lines as the second other used for a toy model for
the vortex sheet with surface tension in \cite{ambroseCambridgeLecture}.  

We recall the model \eqref{etaEquation}, \eqref{uEquation}, and we rearrange terms as follows:
   \begin{equation}\nonumber
      \eta_t = - \frac{1}{2}r_0u_x - \frac{1}{2}\eta u_x - \eta_xu,
      \end{equation}
      \begin{equation}\nonumber
       u_t = \left(1-\frac{(4\overline{\alpha}
      +r_0)r_0}{8}\partial_{xx}\right)^{-1}
      \left(- \overline{\beta}\eta_x - uu_x  - \kappa u + \frac{\gamma \overline{\beta} r_0}{2}u_{xx}\right).
   \end{equation}
We introduce an approximate system, giving equations for 
$\eta^{\epsilon}_t$ and $u^{\epsilon}_t$ using mollifier operators  
$\mathcal{J_\epsilon}$ for any approximation parameter $\epsilon > 0.$  (For a detailed description of mollifier operators and
their properties, the interested reader could consult Chapter 3 of \cite{majdaBertozzi}; it is enough to say that they are 
self-adjoint smoothing operators, and could be taken specifically to be truncation of the Fourier series at level $1/\varepsilon.$)
Our approximate system is:
\begin{eqnarray}
    &&\eta^{\epsilon}_{t}       \label{etaEpsilonT}
                       =- \frac{1}{2}r_0u^{\epsilon}_x - \frac{1}{2}\eta^{\epsilon}  u^{\epsilon}_x 
                      - \mathcal{J}_{\epsilon}\left( (\mathcal{J}_{\epsilon}\eta^{\epsilon}_x) u^{\epsilon}\right),\\
    &&u^{\epsilon}_{t}           \label{uEpsilonT}
                    =A^{-1}\left(- \overline{\beta}\eta^\epsilon_x - u^\epsilon u^\epsilon_x  - \kappa u^{\epsilon} + \frac{\gamma \overline{\beta} r_0}{2}u^\epsilon_{xx}\right),
\end{eqnarray}
where 
$A^{-1} =\left[1-\frac{(4\overline{\alpha}
+r_0)r_0}{8}\partial_{xx}\right]^{-1}.$
The system \eqref{etaEpsilonT}, \eqref{uEpsilonT} is taken with 
initial conditions, namely
   \begin{equation}\label{initialConditionsEpsilon}
       \eta^{\epsilon}(\cdot,0) = \eta_{0}\in H^{s},\qquad  u^{\epsilon}(\cdot,0) = u_{0}\in H^{s+1}.
   \end{equation}
 Here, $s\in\mathbb{N}$ with $s\geq 2,$ and
 $H^{s}=H^{s}(\mathbb{T})$ and $H^{s+1}=H^{s+1}(\mathbb{T})$ are the standard 
 spatially periodic $L^{2}$-based Sobolev spaces, equipped with the usual norms.

We will show that given initial data $\eta_0$ and $u_0$, there exists a time interval $[0,T]$ (depending only on the size
of the data) such that there exists a solution $(\eta,u)$ solving our initial value problem over the time interval $[0,T].$
Our first step is to apply the Picard Theorem on 
Banach spaces, which we now state \cite{majdaBertozzi}.

   \begin{thm}[Picard Theorem]
       Let $\mathcal{B}$ be a Banach space, and let $\mathcal{O}\subseteq \mathcal{B}$ be an open set.  Let $F:\mathcal{O} \rightarrow \mathcal{B}$ such that $F$ is locally Lipschitz: $\forall X \in \mathcal{O}, \exists \lambda > 0$ and an open set $U \subseteq \mathcal{O}$ such that $\forall Y, Z \in U$,
      \[\norm{F(Y)-F(Z)}_{\mathcal{B}} \leq \lambda \norm{Y-Z}_{\mathcal{B}}.\]
      Then,  $\forall X_0 \in \mathcal{O}, \exists\ T>0$ and a unique $X \in C^{1}([-T,T];\mathcal{O})$ such that $X$ solves the initial value problem 
      \[ \frac{dX}{dt} = F(X),\quad X(0)= X_0. \]
   \end{thm}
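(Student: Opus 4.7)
The plan is to reformulate the initial value problem as a fixed-point equation and apply the Banach contraction mapping principle, following the classical Picard-Lindel\"of strategy adapted to the Banach space setting. I would first observe that a function $X \in C^{1}([-T,T]; \mathcal{O})$ solves $dX/dt = F(X)$ with $X(0) = X_{0}$ if and only if $X \in C([-T,T]; \mathcal{O})$ satisfies the integral equation
\begin{equation}\nonumber
X(t) = X_{0} + \int_{0}^{t} F(X(s))\, ds.
\end{equation}
This equivalence uses the fundamental theorem of calculus for Banach-space-valued integrals and the continuity of $F$ along $X$.

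Next, given $X_{0} \in \mathcal{O}$, I would invoke the hypothesis to produce an open neighborhood $U \subseteq \mathcal{O}$ of $X_{0}$ on which $F$ is Lipschitz with constant $\lambda$. I would fix a closed ball $\overline{B}(X_{0}, r) \subseteq U$, and note that $F$ is bounded on this ball, say $\|F(Y)\|_{\mathcal{B}} \leq M$ for all $Y \in \overline{B}(X_{0}, r)$, since a Lipschitz map is locally bounded. Choose $T > 0$ small enough that $TM \leq r$ and $T\lambda < 1$. Let $\mathcal{X}_{T}$ denote the complete metric space of continuous curves $X : [-T,T] \to \overline{B}(X_{0}, r)$ (a closed subset of the Banach space $C([-T,T]; \mathcal{B})$ with supremum norm), and define the Picard operator $\Phi$ on $\mathcal{X}_{T}$ by $\Phi(X)(t) = X_{0} + \int_{0}^{t} F(X(s))\, ds$.

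The two properties to verify are that $\Phi$ maps $\mathcal{X}_{T}$ into itself and that $\Phi$ is a strict contraction. For the first, the bound $\|F(X(s))\|_{\mathcal{B}} \leq M$ gives $\|\Phi(X)(t) - X_{0}\|_{\mathcal{B}} \leq TM \leq r$, so $\Phi(X)$ takes values in $\overline{B}(X_{0}, r)$; its continuity in $t$ follows from the continuity of the integral. For the second, the Lipschitz estimate on $F$ yields
\begin{equation}\nonumber
\|\Phi(Y)(t) - \Phi(Z)(t)\|_{\mathcal{B}} \leq \int_{0}^{|t|} \lambda \|Y(s) - Z(s)\|_{\mathcal{B}}\, ds \leq T\lambda \sup_{s \in [-T,T]} \|Y(s) - Z(s)\|_{\mathcal{B}},
\end{equation}
so $\Phi$ is a contraction with factor $T\lambda < 1$ on $\mathcal{X}_{T}$. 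The Banach fixed-point theorem then produces a unique $X \in \mathcal{X}_{T}$ with $\Phi(X) = X$, which solves the integral equation and hence, by the equivalence above, the initial value problem, with $X \in C^{1}([-T,T]; \mathcal{O})$ since $F(X(\cdot))$ is continuous.

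The main subtlety to bookkeep is uniqueness in the full class $C^{1}([-T,T]; \mathcal{O})$ rather than merely in $\mathcal{X}_{T}$: a priori another solution could exit $\overline{B}(X_{0}, r)$ before returning. I would handle this by shrinking $T$ if necessary so that any $C^{1}$ solution $\tilde X$ with $\tilde X(0) = X_{0}$ remains in $\overline{B}(X_{0}, r)$ on $[-T,T]$ (possible by continuity of $\tilde X$) and then applying a Gr\"onwall argument to $\|X(t) - \tilde X(t)\|_{\mathcal{B}}$ using the Lipschitz bound on $F$, which forces $\tilde X \equiv X$. No step is genuinely hard; the most delicate point is simply the interplay between the size of the Lipschitz neighborhood, the bound $M$, and the choice of $T$, which must be tuned to secure both self-mapping and contraction simultaneously.
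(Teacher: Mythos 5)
The paper does not prove this theorem; it is quoted as a known result from Majda and Bertozzi's book, and your contraction-mapping argument is exactly the standard proof one finds there and in any ODE-on-Banach-spaces reference. Your reformulation as the integral fixed-point equation, the tuning of $T$ against both the self-mapping bound $TM \leq r$ and the contraction factor $T\lambda < 1$, and the Gr\"onwall step to upgrade uniqueness from the closed ball to all of $C^{1}([-T,T];\mathcal{O})$ are all correct and complete.
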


We will take $\mathcal{O} = \mathcal{B} = H^{s} \times H^{s+1}$ and introduce the following lemma:
\begin{lemma}\label{lipschitz}
      Let $(\eta_0,u_0) \in \mathcal{O}$ be given. For any $\epsilon > 0$, there exists  $T_\epsilon > 0$ and $(\eta^{\epsilon}, u^{\epsilon}) \in C^{1}([0,T_{\epsilon}]; \mathcal{O})$ such that $(\eta^{\epsilon}, u^{\epsilon})$ satisfies 
      \eqref{etaEpsilonT}, \eqref{uEpsilonT}  and the initial conditions
      \eqref{initialConditionsEpsilon}.
   \end{lemma}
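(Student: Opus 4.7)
The plan is to apply the Picard theorem on Banach spaces with $\mathcal{B}=\mathcal{O}=H^{s}\times H^{s+1}$; writing the system \eqref{etaEpsilonT}--\eqref{uEpsilonT} as $\frac{d}{dt}(\eta^{\epsilon},u^{\epsilon})=F_{\epsilon}(\eta^{\epsilon},u^{\epsilon})$, it suffices to show that $F_{\epsilon}\colon\mathcal{O}\to\mathcal{O}$ is well-defined and locally Lipschitz. The mollifier $\mathcal{J}_{\epsilon}$ plays the crucial role here, since it satisfies $\norm{\mathcal{J}_{\epsilon}f}_{H^{k}}\leq C(\epsilon,k,\ell)\norm{f}_{H^{\ell}}$ for any $k,\ell\geq 0$, making the mollified contributions bounded between any pair of Sobolev spaces with (possibly large) $\epsilon$-dependent constants.

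First I would verify that $F_{\epsilon}$ maps $\mathcal{O}$ into itself. For the $\eta^{\epsilon}$-component, $-\tfrac{1}{2}r_{0}u^{\epsilon}_{x}$ lies in $H^{s}$ because $r_{0}$ is constant and $u^{\epsilon}_{x}\in H^{s}$; the product $-\tfrac{1}{2}\eta^{\epsilon}u^{\epsilon}_{x}$ lies in $H^{s}$ because $H^{s}$ is a Banach algebra for $s\geq 2$; and $\mathcal{J}_{\epsilon}\bigl((\mathcal{J}_{\epsilon}\eta^{\epsilon}_{x})u^{\epsilon}\bigr)$ lies in $H^{s}$ (indeed in $H^{\infty}$) thanks to the smoothing. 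For the $u^{\epsilon}$-component, the operator $A^{-1}=[1-c\partial_{xx}]^{-1}$ with $c=(4\bar{\alpha}+r_{0})r_{0}/8>0$ has Fourier symbol $(1+ck^{2})^{-1}$, so it gains two derivatives and maps $H^{s-1}$ boundedly into $H^{s+1}$. Its argument $-\bar{\beta}\eta^{\epsilon}_{x}-u^{\epsilon}u^{\epsilon}_{x}-\kappa u^{\epsilon}+\tfrac{\gamma\bar{\beta}r_{0}}{2}u^{\epsilon}_{xx}$ lies in $H^{s-1}$ (using the algebra property on the quadratic term), so $F_{\epsilon}$ produces an $H^{s+1}$ function, as required.

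Next I would check the local Lipschitz property. The linear contributions $-\tfrac{1}{2}r_{0}\partial_{x}u^{\epsilon}$, $-\kappa A^{-1}u^{\epsilon}$, $-\bar{\beta}A^{-1}\partial_{x}\eta^{\epsilon}$, and $\tfrac{\gamma\bar{\beta}r_{0}}{2}A^{-1}\partial_{xx}u^{\epsilon}$ are each bounded linear operators between the relevant Sobolev spaces, hence globally Lipschitz. The remaining terms $\eta^{\epsilon}u^{\epsilon}_{x}$, $u^{\epsilon}u^{\epsilon}_{x}$, and $\mathcal{J}_{\epsilon}\bigl((\mathcal{J}_{\epsilon}\eta^{\epsilon}_{x})u^{\epsilon}\bigr)$ are continuous bilinear maps into the target spaces; for the last one this uses the $H^{s-1}\to H^{s}$ boundedness of $\mathcal{J}_{\epsilon}$. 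Any continuous bilinear map $B$ satisfies the standard estimate $\norm{B(X,X)-B(Y,Y)}\leq C\bigl(\norm{X}+\norm{Y}\bigr)\norm{X-Y}$, which yields local Lipschitz continuity of $F_{\epsilon}$ on bounded subsets of $\mathcal{O}$.

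The only delicate point is a matter of derivative counting: without the mollifier, the term $\eta^{\epsilon}_{x}u^{\epsilon}$ in the $\eta$-equation would demand $\eta^{\epsilon}_{x}\in H^{s}$, i.e.\ one more derivative on $\eta^{\epsilon}$ than we have. Inserting $\mathcal{J}_{\epsilon}$ around $\eta^{\epsilon}_{x}$ circumvents this loss at the cost of $\epsilon$-dependent constants, and it is precisely this $\epsilon$-dependence that makes the subsequent uniform-in-$\epsilon$ energy estimates (needed to pass to the limit $\epsilon\to 0$) the real work of the existence proof. Once the two properties above are in hand, the Picard theorem directly yields the claimed solution $(\eta^{\epsilon},u^{\epsilon})\in C^{1}([0,T_{\epsilon}];\mathcal{O})$, completing the lemma.
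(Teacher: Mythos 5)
Your proposal is correct and follows exactly the route the paper indicates: the paper omits this proof, stating only that it ``follows immediately from the Picard Theorem and from properties of mollifiers,'' and you have simply filled in the routine verifications (that $F_{\epsilon}$ maps $H^{s}\times H^{s+1}$ into itself, using the mollifier on the dangerous term $\eta^{\epsilon}_{x}u^{\epsilon}$ and the two-derivative gain of $A^{-1}$, and that linearity plus the Banach-algebra property give local Lipschitz continuity). Your closing observation correctly identifies that $\eta^{\epsilon}_{x}u^{\epsilon}$ is the only term that would obstruct the argument without regularization, which matches the paper's own comment about why the mollifiers are placed where they are.
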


We omit the proof of Lemma \ref{lipschitz}; it follows immediately from the Picard Theorem and from properties of mollifiers.   
Note that we only introduced two mollifier operators on the right-hand side of \eqref{etaEpsilonT}, and none on the right-hand
side of \eqref{uEpsilonT}.  For \eqref{etaEpsilonT}, this is because when solving \eqref{etaEquation} for $\eta_{t},$ 
if we consider $(\eta,u)\in H^{s}\times H^{s+1},$ then the only unbounded term is $\eta_{x}u.$  
(We have included two instances of $\mathcal{J}_{\epsilon}$ 
to be able to achieve a balance when integrating by parts in the energy estimates to follow.)
For \eqref{uEpsilonT},
when solving \eqref{uEquation} for $u_{t}$ and again considering $(\eta,u)\in H^{s}\times H^{s+1},$ there are no unbounded terms
(because of the presence of the operator $A^{-1}$).

\subsection{Energy Estimate}
Next, we will show that there exists $T > 0$ and $\epsilon_0 >0$ such that for all $\epsilon \in (0,\epsilon_0)$, the solutions $(\eta^\epsilon,u^\epsilon)$ are elements of $C([0,T];\mathcal{O})$. In order to complete the proof, we will use the
 following ODE theorem \cite{majdaBertozzi}:

   \begin{thm}[Continuation Theorem for ODEs]\label{continuationTheorem}
      Let $\mathcal{B}$ be a Banach space and $\Omega \subseteq \mathcal{B}$ be an open set and $F: \Omega \rightarrow{\mathcal{B}}$ be locally Lipschitz continuous.
      Let $X_0=(\eta_0,u_0) \in \Omega$ and $X=(\eta,u)$ be the solution of initial value problem:\\
      \begin{equation*}
         \frac{dX}{dt}= F(X),\quad X(0)= X_0,\\
      \end{equation*}
      and let $T > 0$ be the maximal time such that $X \in C^{1}([0,T];\Omega)$. Then either $T = \infty$ or $T < \infty$ with $X(t)$ leaving the set $\Omega$ as $t \rightarrow{T}$. 
     
   \end{thm}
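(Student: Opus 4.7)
The plan is to run the classical maximal-existence argument for ODEs in a Banach space: if the maximal existence time $T$ is finite but the trajectory stays in a compact subset of $\Omega$ as $t\to T$, then the Picard Theorem can be invoked at the limit point to produce an extension past $T$, contradicting the definition of $T$ as maximal. Accordingly I would assume $T<\infty$ and argue by contradiction against the conclusion ``$X(t)$ leaves $\Omega$ as $t\to T$.'' By ``leaves,'' I interpret the standard notion that for every compact $K\subset\Omega$ one has $X(t)\notin K$ for all $t$ sufficiently close to $T$.

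Under the negation, there is a compact set $K\subset\Omega$ and a sequence $t_n\uparrow T$ with $X(t_n)\in K$. I would pick $r>0$ so that the closed $r$-neighborhood $K'$ of $K$ is still a compact subset of $\Omega$. Since $F$ is locally Lipschitz, it is continuous, hence bounded on $K'$, say $\|F(y)\|_{\mathcal{B}}\leq M$ on $K'$. So long as the trajectory remains in $K'$, the integral form of the ODE yields the Lipschitz estimate $\|X(t)-X(s)\|_{\mathcal{B}}\leq M|t-s|$. Choosing $n$ large enough that $T-t_n<r/M$, this bound, together with $X(t_n)\in K$, prevents $X(t)$ from escaping $K'$ over the interval $[t_n,T)$. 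The same Lipschitz estimate then shows that $X(t)$ is Cauchy as $t\to T$, so $X_{\ast}:=\lim_{t\to T}X(t)$ exists and lies in $K'\subset\Omega$.

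Finally I would apply the Picard Theorem at $X_\ast\in\Omega$ to obtain a unique $C^{1}$ solution $\tilde X$ on some interval $[T-\delta,T+\delta]$ with $\tilde X(T)=X_\ast$. Picard uniqueness forces $\tilde X\equiv X$ on $[T-\delta,T)$, so concatenation produces a $C^{1}$ solution on $[0,T+\delta]$, contradicting the maximality of $T$. The argument is classical, and the only step requiring genuine care is the ``no escape'' upgrade from the sequential hypothesis $X(t_n)\in K$ to the uniform-in-time statement $X(t)\in K'$ on a full left-neighborhood of $T$; this is the main obstacle, and it is handled precisely by the Lipschitz estimate coming from the bound on $F$ over $K'$.
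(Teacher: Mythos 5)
The paper does not prove this statement; it cites it directly from \cite{majdaBertozzi} and uses it as a black box, so there is no internal proof to compare your argument against. Evaluating your proposal on its own terms, the overall skeleton (contradiction via extension past $T$, bound on $F$ along the trajectory giving a Lipschitz-in-time estimate, convergence of $X(t)$ to a limit point $X_*\in\Omega$, and reapplication of Picard at $X_*$) is the standard one and the tail of the argument is sound.

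However, there is a genuine gap at the step where you ``pick $r>0$ so that the closed $r$-neighborhood $K'$ of $K$ is still a compact subset of $\Omega$.'' In an infinite-dimensional Banach space this is impossible: the closed $r$-neighborhood of any set contains closed balls of radius $r$, and closed balls are never compact when $\dim\mathcal{B}=\infty$. Your subsequent appeal to compactness of $K'$ to conclude that $F$ is bounded on $K'$ therefore has no justification as written. The step is salvageable, but by a different argument: since $K$ is compact and $F$ is locally Lipschitz (hence locally bounded), one covers $K$ by finitely many balls $B(x_i,\delta_i)$ on each of which $F$ is bounded, and then chooses $r$ smaller than a Lebesgue number of this cover so that the (\emph{non-compact} but closed) $r$-neighborhood $K'$ is still contained in $\bigcup_i B(x_i,\delta_i)$, whence $F$ is bounded on $K'$. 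With that fix the rest goes through: $X_*\in K'\subset\Omega$ because $K'$ is closed (compactness is not needed here), and Picard at $X_*$ extends the solution. You should also be aware that your interpretation of ``$X(t)$ leaves $\Omega$'' as ``$X(t)$ exits every compact $K\subset\Omega$'' is one of several used in the literature and is weaker than the property this paper actually invokes (namely that a uniform bound on $\lVert X(t)\rVert_{H^s\times H^{s+1}}$ forces continuation); in the setting $\Omega=\mathcal{B}$ a norm-bounded trajectory need not stay in any fixed compact set, so the two readings are not equivalent in infinite dimensions.
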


In order to use Theorem \ref{continuationTheorem}, we need to prove that 
the norm of $(\eta^{\epsilon},u^{\epsilon})$ may be controlled uniformly with respect to $\epsilon$. 
We establish this in the following lemma using the energy method.

   \begin{lemma}\label{uniformTime}
      Let $(\eta_0,u_0) \in \mathcal{O}$. There exists $T > 0$ such that for all  $\epsilon \in (0,1]$,
      the initial value problem  \eqref{etaEpsilonT}, \eqref{uEpsilonT}, \eqref{initialConditionsEpsilon} 
      has a solution $(\eta^\epsilon, u^\epsilon) \in C([0,T],\mathcal{O})$.
   \end{lemma}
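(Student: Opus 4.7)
The plan is to run a uniform-in-$\varepsilon$ energy estimate at the $H^s\times H^{s+1}$ level, then apply the Continuation Theorem \ref{continuationTheorem}. The natural energy is a weighted combination
\[
E_{s}(t)=\sum_{k=0}^{s}\left[\,\tfrac{1}{2}\!\int(\partial_{x}^{k}\eta^{\varepsilon})^{2}\,dx\;+\;\tfrac{\mu}{2}\!\int\Bigl((\partial_{x}^{k}u^{\varepsilon})^{2}+\tfrac{(4\bar{\alpha}+r_{0})r_{0}}{8}(\partial_{x}^{k+1}u^{\varepsilon})^{2}\Bigr)dx\right],
\]
with the weight $\mu=r_{0}/(2\bar{\beta})$ chosen so that the top-order linear cross terms cancel. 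Observe that $\tfrac{1}{2}\int(\partial_{x}^{k}u)^{2}+\tfrac{c}{2}\int(\partial_{x}^{k+1}u)^{2}=\tfrac{1}{2}\int\partial_{x}^{k}u\cdot A\partial_{x}^{k}u$, where $c=(4\bar{\alpha}+r_{0})r_{0}/8$, so $E_{s}$ is equivalent to $\|\eta^{\varepsilon}\|_{H^{s}}^{2}+\|u^{\varepsilon}\|_{H^{s+1}}^{2}$ uniformly in $\varepsilon$.

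First I would differentiate each piece of $E_{s}$ in time, substitute \eqref{etaEpsilonT} and \eqref{uEpsilonT}, and use the fact that $A$ commutes with $\partial_{x}$ so that the $u$-contribution becomes $\mu\int\partial_{x}^{k}u^{\varepsilon}\cdot\partial_{x}^{k}\bigl(-\bar{\beta}\eta_{x}^{\varepsilon}-u^{\varepsilon}u_{x}^{\varepsilon}-\kappa u^{\varepsilon}+\tfrac{\gamma\bar{\beta}r_{0}}{2}u_{xx}^{\varepsilon}\bigr)\,dx$. The dangerous highest-order pieces are $-\tfrac{r_{0}}{2}\int\partial_{x}^{k}\eta^{\varepsilon}\,\partial_{x}^{k+1}u^{\varepsilon}$ (from the $\eta$ side) and $-\mu\bar{\beta}\int\partial_{x}^{k}u^{\varepsilon}\,\partial_{x}^{k+1}\eta^{\varepsilon}$ (from the $u$ side); after one integration by parts they sum to $(-\tfrac{r_{0}}{2}+\mu\bar{\beta})\int\partial_{x}^{s+1}u^{\varepsilon}\,\partial_{x}^{s}\eta^{\varepsilon}$, which vanishes by the choice of $\mu$. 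This symmetrization is the whole reason one can close the estimate at this regularity.

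Next I would treat the remaining terms. The viscous contributions $-\mu\kappa\int(\partial_{x}^{k}u^{\varepsilon})^{2}$ and $-\tfrac{\mu\gamma\bar{\beta}r_{0}}{2}\int(\partial_{x}^{k+1}u^{\varepsilon})^{2}$ are non-positive and may be discarded. The Burgers-type nonlinearity $\mu\int\partial_{x}^{k}u^{\varepsilon}\cdot\partial_{x}^{k}(u^{\varepsilon}u_{x}^{\varepsilon})$ is handled by the standard Kato--Ponce commutator trick: splitting off $u^{\varepsilon}\partial_{x}^{k+1}u^{\varepsilon}$ and integrating by parts yields $-\tfrac{1}{2}\int u_{x}^{\varepsilon}(\partial_{x}^{k}u^{\varepsilon})^{2}$, controlled by $\|u^{\varepsilon}\|_{H^{s+1}}^{3}$ via Sobolev embedding $H^{s}\hookrightarrow W^{1,\infty}$ for $s\ge 2$. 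The cubic $\eta$-terms $\int\partial_{x}^{k}\eta^{\varepsilon}\,\partial_{x}^{k}(\eta^{\varepsilon}u_{x}^{\varepsilon})$ are bounded similarly. The genuinely $\varepsilon$-dependent piece is $-\int\partial_{x}^{k}\eta^{\varepsilon}\cdot\partial_{x}^{k}\mathcal{J}_{\varepsilon}((\mathcal{J}_{\varepsilon}\eta_{x}^{\varepsilon})u^{\varepsilon})\,dx$; using self-adjointness of $\mathcal{J}_{\varepsilon}$ and commutation with $\partial_{x}$, and peeling off the highest term $\mathcal{J}_{\varepsilon}\partial_{x}^{k+1}\eta^{\varepsilon}\cdot u^{\varepsilon}$, one gets $-\tfrac{1}{2}\int u^{\varepsilon}\,\partial_{x}\bigl((\mathcal{J}_{\varepsilon}\partial_{x}^{k}\eta^{\varepsilon})^{2}\bigr)dx=\tfrac{1}{2}\int u_{x}^{\varepsilon}(\mathcal{J}_{\varepsilon}\partial_{x}^{k}\eta^{\varepsilon})^{2}dx$, whose bound is uniform in $\varepsilon$ because $\|\mathcal{J}_{\varepsilon}\|\le 1$. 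The Leibniz remainder terms from this expansion are of lower derivative count on $\eta^{\varepsilon}$ and thus trivially bounded.

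Putting everything together gives a differential inequality $\tfrac{d}{dt}E_{s}\le P(E_{s})$ with $P$ a polynomial whose coefficients depend only on $r_{0}$, $\bar{\alpha}$, $\bar{\beta}$, $\kappa$, $\gamma$. A Gr\"onwall / comparison argument with the scalar ODE $\dot{y}=P(y)$, $y(0)=E_{s}(0)$, yields a time $T>0$ depending only on $\|\eta_{0}\|_{H^{s}}+\|u_{0}\|_{H^{s+1}}$ (not on $\varepsilon$) on which $E_{s}(t)$ remains finite; invoking Theorem \ref{continuationTheorem} with $\Omega$ a ball containing the orbit then rules out escape, giving $(\eta^{\varepsilon},u^{\varepsilon})\in C([0,T];\mathcal{O})$ for every $\varepsilon\in(0,1]$. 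The main obstacle is verifying the top-order cancellation while simultaneously tracking the two mollifiers in \eqref{etaEpsilonT}: getting the $\mathcal{J}_{\varepsilon}$'s to fall on the same factor under integration by parts (which is exactly why the authors placed mollifiers symmetrically) is what makes the highest-derivative loss in the transport term $\eta_{x}u$ disappear.
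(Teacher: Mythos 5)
Your proof is correct, and it takes a genuinely different route from the paper's. The paper sets up a bare energy (the $L^2$ norms of $\partial_x^s\eta^\epsilon$ and of $\partial_x^{s+1}u^\epsilon$, plus the zeroth-order pieces), leaves the operator $A^{-1}$ in place inside the $u^\epsilon$-equation, and relies on the observation that $A^{-1}$ smooths by two derivatives: $A^{-1}\bar\beta\eta_x^\epsilon$ belongs to $H^{s+1}$ with norm controlled by $\norm{\eta^\epsilon}_{H^s}$, so the top-order pairing $\int\partial_x^{s+1}u^\epsilon\,\partial_x^{s+1}A^{-1}\bar\beta\eta_x^\epsilon\,dx$ is bounded directly by the energy with no cancellation of any kind. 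You instead fold $A$ into the energy via $\int\partial_x^k u^\epsilon\,A\,\partial_x^k u^\epsilon\,dx$, which strips $A^{-1}$ off the $u^\epsilon$-equation, and you then remove the resulting $\eta$--$u$ coupling with the symmetrizer $\mu=r_0/(2\bar\beta)$. Both routes close; your handling of the viscous terms (sign-definite and discardable), the Burgers nonlinearity, and above all the mollified transport term (the integration by parts producing $\tfrac12\int u_x^\epsilon(\mathcal{J}_\epsilon\partial_x^s\eta^\epsilon)^2\,dx$) all track the paper's argument, and the final Gr\"onwall-plus-continuation step is identical.

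One caveat worth flagging: your claim that the symmetrization ``is the whole reason one can close the estimate at this regularity'' is not correct. After one integration by parts, the cross term in question is $(-\tfrac{r_0}{2}+\mu\bar\beta)\int\partial_x^{s+1}u^\epsilon\,\partial_x^s\eta^\epsilon\,dx$. Regardless of the value of $\mu$, this is bounded in absolute value by $\norm{u^\epsilon}_{H^{s+1}}\norm{\eta^\epsilon}_{H^s}$, which is already dominated by the energy because the regularity class places $u$ one derivative above $\eta$. Your choice of $\mu$ therefore annihilates a term that was already harmless; it is the asymmetric regularity $H^s\times H^{s+1}$ (equivalently, the smoothing of $A^{-1}$, which is what produced that asymmetry in the first place) that makes the linear coupling benign, and the paper's direct estimate of its $\Omega_1$ makes this transparent. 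The genuine obstruction to closing the estimate is the transport term $\eta_x^\epsilon u^\epsilon$ in the $\eta$-equation, the only place a derivative on $\eta^\epsilon$ is truly lost, and there your treatment coincides with the paper's $\Psi_3$ argument.
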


   \begin{proof}
   Let $\epsilon \in (0,1]$ be given. We know there exists $T_{\epsilon}>0$ and 
   $(\eta^{\epsilon}, u^{\epsilon}) \in C^{1}([-T_{\epsilon},T_{\epsilon}];\mathcal{O})$, which solves the regularized initial value
   problem.
   Now, we will show that these solutions can be continued until a time $T$, with $T$ being independent of $\epsilon$.
   
We define an energy $E(t) = E_0(t) + E_1(t) + E_2(t)$ to be
      \begin{eqnarray}
\nonumber         E_0(t) &=& \frac{1}{2} \int_0^{2\pi} (\eta^\epsilon)^2 +(u^\epsilon)^2 dx,\\ 
\nonumber         E_1(t) &=& \frac{1}{2} \int_0^{2\pi} (\partial^{s}_x  \eta^\epsilon)^2 dx,\\
\nonumber        E_2(t) &=& \frac{1}{2} \int_0^{2\pi} (\partial^{s+1}_x  u^{\epsilon})^2 dx.
      \end{eqnarray}
Of course, this energy is equivalent to the square of the $H^{s}$-norm of $\eta^{\epsilon}$ 
plus the square of the $H^{s+1}$-norm of $u^{\epsilon}.$
We will show that the time derivative of the energy is bounded in terms of the energy, as long as $s\geq2.$  

We begin with showing $\frac{dE_0}{dt}$ is bounded appropriately, so we calculate
\begin{equation}\label{startingEnergyT}
    \frac{dE_0}{dt} 
    =\int_0^{2\pi} \eta^\epsilon(\eta^\epsilon_t) + u^\epsilon (u^\epsilon_t)\ dx.
\end{equation}
Substituting \eqref{etaEpsilonT} and \eqref{uEpsilonT} into \eqref{startingEnergyT}, we have
   \begin{align*}
       \frac{dE_0}{dt}
        =&\int_0^{2\pi} \eta^{\epsilon} \left(- \frac{1}{2}r_0u^{\epsilon}_x - \frac{1}{2}\eta^{\epsilon}  u^{\epsilon}_x 
        -\mathcal{J}_{\epsilon}\left( (\mathcal{J}_{\epsilon}\eta^{\epsilon}_x) u^{\epsilon}\right)\right)\ dx \\
         &+ \int_0^{2\pi}u^{\epsilon}\left(1-\frac{(4\overline{\alpha}+r_0)r_0}{8}\partial_{xx}\right)^{-1}\left(- \overline{\beta}\eta_x - uu_x  - \kappa u + \frac{\gamma \overline{\beta} r_0}{2}u_{xx}\right)\ dx.
   \end{align*}
We may then immediately bound this as
\begin{multline}\nonumber
\frac{dE_0}{dt} 
\leq c\bigg(\ \norm{\eta^{\epsilon}}_{H^0} \norm{u^{\epsilon}}_{H^1} 
  +\norm{\eta^{\epsilon}}_{H^0}  \norm{\eta^{\epsilon}}_{H^0} \norm{u^{\epsilon}}_{H^2}\\
  +\norm{\eta^\epsilon}_{H^0} \norm{\eta^{\epsilon}}_{H^2}   
  \norm{u^\epsilon}_{H^0}
  +\norm{u^{\epsilon}}_{H^0} \norm{\eta^{\epsilon}}_{H^0}
  +\norm{u^{\epsilon}}^{3}_{H^0}
  +\norm{u^{\epsilon}}^{2}_{H^0} +\norm{u^{\epsilon}}^{2}_{H^0}\bigg).
\end{multline}
Therefore, $\frac{dE_0}{dt}$ satisfies the following energy estimate as long as $s\geq 2$:
\begin{equation}\nonumber
    \frac{dE_0}{dt} \leq c(E + E^{\frac{3}{2}}).
\end{equation}

Now, we turn to $E_{1};$ taking its time derivative, we have
\begin{equation}\label{dE_1}
    \frac{dE_1}{dt}
  =\int_0^{2\pi} (\partial^{s}_x \eta^\epsilon)(\partial^{s}_x \eta^{\epsilon}_t)\ dx.
\end{equation}
Substituting \eqref{etaEpsilonT}  into \eqref{dE_1}, we have
\begin{multline}\label{dE1dt}
    \frac{dE_1}{dt}
     =-\frac{r_0}{2} \int_0^{2\pi} (\partial^{s}_x  \eta^\epsilon)(\partial^{s+1}_x u^\epsilon)\ dx 
    - \frac{1}{2} \int_0^{2\pi} (\partial^{s}_x \eta^\epsilon)\partial^{s}_x[\eta^\epsilon u^{\epsilon}_x]\ dx \\
    + \int_0^{2\pi} (\partial^{s}_x \mathcal{J}_{\epsilon} \eta^\epsilon) \partial^{s}_x [(\mathcal{J}_\epsilon \eta^{\epsilon}_x)
    u^{\epsilon}]\ dx
    =\sum_{k=1}^{3} \Psi_k.
\end{multline}
In the formula for $\Psi_{3},$ we have already used that the mollifier operator $\mathcal{J}_{\epsilon}$ is self-adjoint.
We will  show each $\Psi_k$ in \eqref{dE1dt} is bounded in terms of the energy, $E$.

Since the energy is equivalent to the sum of the square of the $H^{s}$-norm of $\eta^{\epsilon}$ and the square of the
$H^{s+1}$-norm of $u^{\epsilon},$
the bound
\begin{equation}\label{psi1Conclusion}
\Psi_1 \leq c E
\end{equation}
is immediate.  
For $\Psi_{2},$ we immediately may bound it as
\begin{equation}\nonumber
\Psi_{2}\leq\|\partial_{x}^{s}\eta^{\epsilon}\|_{0}
\|\partial_{x}^{s}\left(\eta^{\epsilon} u^{\epsilon}_{x}\right)\|_{0}
\leq\|\eta^{\epsilon}\|_{s}\|\eta^{\epsilon} u^{\epsilon}_{x}\|_{s}.
\end{equation}
Since $s\geq1,$ we may use the Sobolev algebra property, finding
\begin{equation}\label{psi2Conclusion}
    \Psi_2 \leq c\|\eta^{\epsilon}\|_{s}^{2}\|u^{\epsilon}\|_{s+1}\leq c E^{\frac{3}{2}}.
\end{equation}

Now, we turn to the third term, $\Psi_{3},$ on the right-hand side of \eqref{dE1dt}. Using the product rule to expand
derivatives, $\Psi_{3}$ can be rewritten as follows:
\begin{equation}\label{psi3Expansion}
    \Psi_3
    =-\int_0^{2\pi} (\partial^{s}_x\mathcal{J}_\epsilon \eta^\epsilon)  \sum_{k=0}^{s} \binom{s}{k} (\partial^{k+1}_x \mathcal{J}_\epsilon \eta^{\epsilon})(\partial^{s-k}_x  u^\epsilon)\ dx.
\end{equation}
The most singular term on the right-hand side of \eqref{psi3Expansion} is the $k=s$ term, for which all derivatives fall on 
$\eta^{\epsilon}.$  Thus we decompose \eqref{psi3Expansion} as
\begin{multline}\label{psi3MostImportant}
\Psi_{3}= -\int_0^{2\pi}(\partial^{s}_x \mathcal{J}_{\epsilon} \eta^\epsilon) (\partial^{s+1}_x \mathcal{J}_\epsilon \eta^\epsilon)(\partial_x  u^\epsilon)\ dx
\\
-\int_0^{2\pi} (\partial^{s}_x\mathcal{J}_\epsilon \eta^\epsilon)  \sum_{k=0}^{s-1} \binom{s}{k} (\partial^{k+1}_x \mathcal{J}_\epsilon \eta^{\epsilon})(\partial^{s-k}_x  u^\epsilon)\ dx.
\end{multline}
The first term on the right-hand side of \eqref{psi3MostImportant} can be integrated by parts, arriving at
\begin{multline}\label{psi3Final}
\Psi_{3}=
\frac{1}{2}\int_0^{2\pi}(\partial^{s}_x \mathcal{J}_{\epsilon} \eta^\epsilon)^{2} (\partial_{x}^{2}  u^\epsilon)\ dx
\\
-\int_0^{2\pi} (\partial^{s}_x\mathcal{J}_\epsilon \eta^\epsilon)  \sum_{k=0}^{s-1} \binom{s}{k} (\partial^{k+1}_x \mathcal{J}_\epsilon \eta^{\epsilon})(\partial^{s-k}_x  u^\epsilon)\ dx.
\end{multline}
We see then that the right-hand side of \eqref{psi3Final} involves at most $s$ derivatives of $\eta^{\varepsilon}$ and at most
$s+1$ derivatives of $u^{\varepsilon};$ this implies 
\begin{equation}\label{psi3Conclusion}
\Psi_{3}\leq cE^{3/2}.
\end{equation}
Combining \eqref{psi1Conclusion}, \eqref{psi2Conclusion}, and \eqref{psi3Conclusion}, we have
\begin{equation}\nonumber
\frac{dE_{1}}{dt}\leq c(E+E^{3/2}).
\end{equation}

Just as $\frac{dE_0}{dt}$ and $\frac{dE_1}{dt}$ are bounded by the energy, we will also show $\frac{dE_2}{dt}$ is bounded by $E$. Taking the derivative of $E_{2}$ with respect to time, we have
 \begin{equation}\label{e2TimeDerivative}
     \frac{dE_2}{dt} 
    = \frac{1}{2} \int_0^{2\pi} (\partial^{s+1}_x  u^{\epsilon}) (\partial^{s+1}_x u^{\epsilon}_t) dx.
 \end{equation}
Substituting  \eqref{uEpsilonT}  into \eqref{e2TimeDerivative} leads to the following sum:
\begin{multline}\label{e2TimeDerivativeSum}
    \frac{dE_2}{dt}
    = -\frac{1}{2} \int_0^{2\pi} \left(\partial^{s+1}_x  u^{\epsilon}\right)
    \left(\partial^{s+1}_x   A^{-1}\overline{\beta}\eta^{\epsilon}_x\right)\ dx\\
    -\frac{1}{2} \int_0^{2\pi} \left(\partial^{s+1}_x  u^{\epsilon}\right)
    \left(\partial^{s+1}_x  A^{-1} (u^\epsilon u^{\epsilon}_x)\right)\ dx 
    -\frac{1}{2} \int_0^{2\pi} \left(\partial^{s+1}_x  u^{\epsilon}\right)
    \left(\partial^{s+1}_x   A^{-1} \kappa u^\epsilon\right)\ dx \\
    +\frac{1}{2} \int_0^{2\pi} \left(\partial^{s+1}_x u^{\epsilon}\right)
    \left(\partial^{s+1}_x   A^{-1}\frac{\gamma \overline{\beta}r_0}{2}u^{\epsilon}_{xx}\right)\ dx
    =\sum_{k=1}^{4} \Omega_k.
\end{multline}

We begin to estimate the first term in the summation in \eqref{e2TimeDerivativeSum}. 
We can bound both factors in $L^{2}:$
\begin{equation}\nonumber
       \Omega_1
    \leq c \norm{\partial^{s+1}_x  u^\epsilon}_{L^2} \norm{ \partial^{s+2}_x  A^{-1} \eta^{\epsilon}}_{L^2}.
\end{equation}
We recall that $A^{-1}$ smoothes by two derivatives, leading us to find
\begin{equation}\nonumber
\Omega_1
    \leq c\norm{u^\epsilon}_{H^{s+1}} \norm{\eta^\epsilon}_{H^s}.
\end{equation}
Thus, we have $\Omega_1$ bounded by the energy:
\begin{equation}\nonumber
      \Omega_1
    \leq c E^{\frac{1}{2}}_2  E^{\frac{1}{2}}_1
    \leq c E.
\end{equation}
Next, we turn to the second summand on the right-hand side of \eqref{e2TimeDerivativeSum}, $\Omega_2.$ 
We again bound each of the two factors in $L^{2}:$
\begin{equation}\nonumber
    \Omega_2
    \leq c \norm{\partial^{s+1}_x  u^{\epsilon}}_{L^2}
    \norm{\partial^{s+1}_x  A^{-1} (u^\epsilon u^{\epsilon}_x)}_{L^2}.
\end{equation}
Again using that $A^{-1}$ smoothes by two derivatives, we have
\begin{equation} \nonumber
 \Omega_2
   \leq c \norm{u^\epsilon}_{H^{s+1}} \norm{u^\epsilon u^{\epsilon}_x}_{H^{s-1}}.
   \end{equation}   
Using the Sobolev algebra property, this yields the desired bound, namely
\begin{equation}\nonumber
\Omega_2
\leq c E^{\frac{3}{2}}.   
\end{equation}
We move on to $\Omega_3,$ and estimate it similarly, finding
\begin{equation*}
    \Omega_3
    \leq c \norm{\partial^{s}_x  u^{\epsilon}}_{L^2} \norm{\partial^{s}_x  u^\epsilon}_{L^2},
\end{equation*}
which implies
   \begin{equation}\nonumber
       \Omega_3
        \leq cE.
   \end{equation}
Lastly, we estimate $\Omega_4.$  For the second factor in $\Omega_4$, we use again that $A^{-1}$ is smoothing by two derivatives.  These considerations yield the bound
\begin{equation*}
    \Omega_4 
\leq c\norm{\partial^{s+1}_x  u^{\epsilon}}_{L^2} \norm{\partial^{s+1}_x u^{\epsilon}}_{L^2}
\leq cE.
\end{equation*}

We have now established $\sum_{k=1}^{4} \Omega_k \leq c(E+E^{\frac{3}{2}})$. Thus, we arrive at the corresponding 
bound for $\frac{dE_2}{dt},$
\begin{equation}\nonumber
\frac{dE_2}{dt} \leq c\left(E+E^{\frac{3}{2}}\right),
\end{equation}
and also for $\frac{dE}{dt},$
\begin{equation}\label{finalEnergyBound}
    \frac{dE}{dt} = \frac{dE_0}{dt}+\frac{dE_1}{dt}+\frac{dE_2}{dt} \leq c\left(E+E^{\frac{3}{2}}\right).
\end{equation}

We let $\overline{d} > 0$ be such that $E(0) \leq \overline{d}$. 
We ask on what interval of values of $t$ we may guarantee that $E(t) \leq 2\overline{d};$ for such values of $t,$ we have 
   \begin{equation*}
       \frac{dE}{dt} 
       \leq c\left(E+E^{\frac{3}{2}}\right)
       \leq c \left(2\overline{d}+(2\overline{d})^{\frac{3}{2}}\right).
   \end{equation*}
This implies that on an interval on which $E\leq 2\overline{d},$
  \begin{equation*}
      E 
      \leq c\left(2\overline{d}+(2\overline{d})^{\frac{3}{2}}\right)t + \overline{d}.
  \end{equation*}    
Thus, we can conclude that $E(t)\leq 2\overline{d}$ for all $t$ satisfying
 \begin{equation*}
      t\in\left[0,  \frac{\overline{d}}{c\left(2\overline{d}+(2\overline{d})^{\frac{3}{2}}\right)}\right].
 \end{equation*}
As this time interval is independent of $\epsilon,$ this completes the proof.
\end{proof}

\begin{remark}
We used several times above that the operator $A^{-1}$ is smoothing by two derivatives.  To be more precise,
since we are in the spatially periodic case we may use the Fourier series to see 
that $A^{-1}$ is a bounded linear operator between any space $H^{\ell}$ and 
$H^{\ell+2}.$  This is immediate because the $A$ operator here has constant coefficients.
In Section \ref{ACKSection} 
below, we will need to use an analogous operator, but in the more general case of non-constant coefficients.
This will be more involved, and understanding this inverse on certain function spaces (exponentially weighted Wiener algebras)
will be a significant focus of Section \ref{ACKSection}.
\end{remark}

\subsection{Well-posedness of the initial value problem}
In this section we establish the three elements of well-posedness (existence, uniqueness, and continuous dependence upon
the initial data) for the initial value problem for the non-mollified system \eqref{etaEquation}, \eqref{uEquation}.
We begin with existence, and will at the same time establish regularity of the solution.
In demonstrating the highest regularity (that the solution is continuous in time with values in $H^{s}\times H^{s+1}$),
we rely on the following elementary interpolation inequality; the proof of this may be found many places,
one of which is \cite{ambroseThesis}.
\begin{lemma}(Interpolation Inequality)\label{InterpolationInequality}
   Let $s' \geq 0$ and $s\geq s'$ be given. There exists $c>0$ such that for every $f\in H^{s},$ the following inequality holds:
   \begin{equation}\nonumber
   \norm{f}_{H^{s'}} \leq c\norm{f}^{1-\frac{s'}{s}}_{H^0} \norm{f}^{\frac{s'}{s}}_{H^s}.
   \end{equation}
\end{lemma}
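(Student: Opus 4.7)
The plan is to reduce the bound to a one-line application of Hölder's inequality on the Fourier side. Since the setting is spatially periodic and the Sobolev norms are $L^2$-based, any $f\in H^s(\mathbb{T})$ admits the Fourier representation $f(x)=\sum_{k\in\mathbb{Z}}\hat f(k)e^{ikx}$, and for every real $t\geq 0$ the norm $\|f\|_{H^t}^2$ is equivalent to the Fourier-weighted sum $\sum_k (1+|k|^2)^t|\hat f(k)|^2$. I would work with these equivalent Fourier norms throughout; the constant $c$ in the lemma will absorb the equivalence constants at the end.

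First I would dispose of the two endpoint cases: if $s'=0$ the inequality is a tautology, and if $s'=s$ it reduces to $\|f\|_{H^s}\leq c\|f\|_{H^s}$, which is immediate. Assume therefore $0<s'<s$, and set $\theta=s'/s\in(0,1)$. The key algebraic observation is the pointwise-in-$k$ factorization
\begin{equation*}
(1+|k|^2)^{s'}\,|\hat f(k)|^2 \;=\; \bigl(|\hat f(k)|^2\bigr)^{1-\theta}\,\bigl((1+|k|^2)^{s}|\hat f(k)|^2\bigr)^{\theta}.
\end{equation*}
Summing over $k\in\mathbb{Z}$ and applying Hölder's inequality with conjugate exponents $p=1/(1-\theta)$ and $q=1/\theta$ (whose reciprocals sum to $1$) then produces
\begin{equation*}
\|f\|_{H^{s'}}^2 \;\leq\; \|f\|_{H^0}^{2(1-\theta)}\,\|f\|_{H^s}^{2\theta},
\end{equation*}
and taking a square root gives the stated bound with $\theta=s'/s$.

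I do not anticipate any substantive obstacle here: the entire argument is a single use of Hölder once the factorization above is recognized, and the verification that the exponents are conjugate is immediate from $(1-\theta)+\theta=1$. The only minor bookkeeping is the passage between the Fourier-weighted sum and the genuine $H^t$ norm, which contributes only a multiplicative constant to $c$. An alternative route would be complex interpolation between $H^0$ and $H^s$, but for the $L^2$-based periodic scale used here the direct Fourier--Hölder argument is shorter, transparent, and yields an explicit constant.
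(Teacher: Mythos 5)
Your proof is correct. The paper itself does not include a proof of this lemma --- it simply refers the reader to \cite{ambroseThesis} --- but the standard argument found there and elsewhere is exactly the one you give: factor the Fourier weight as $(1+|k|^2)^{s'} = ((1+|k|^2)^s)^{s'/s}$ and apply H\"older on the sequence side with exponents $1/(1-\theta)$ and $1/\theta$. Your treatment of the endpoint cases $s'=0$ and $s'=s$, and the observation that passing between the weighted $\ell^2$ sum and the $H^t$ norm only affects the constant $c$, are both in order. No gaps.
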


The following is our existence theorem.  
\begin{thm}\label{existenceTheorem}
Let $s\in\mathbb{N}$ such that $s\geq2$ be given.  Let $\eta_{0}\in H^{s}$ and $u_{0}\in H^{s+1}$ be given.  Let $T>0$ be 
as in Lemma \ref{uniformTime}.  Then there exists $(\eta,u)\in C([0,T];H^{s}\times H^{s+1})$ which solves the initial value problem
\eqref{etaEquation}, \eqref{uEquation} with data $\eta(\cdot,0)=\eta_{0},$ $u(\cdot,0)=u_{0}.$
\end{thm}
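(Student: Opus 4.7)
The plan is to pass to the limit $\varepsilon\to 0$ in the mollified solutions produced by Lemma \ref{uniformTime}. From the uniform energy bound \eqref{finalEnergyBound} the family $\{(\eta^\varepsilon, u^\varepsilon)\}$ is bounded in $L^\infty([0,T]; H^s\times H^{s+1})$, so Banach--Alaoglu yields a weak-$\ast$ limit $(\eta, u)$ along a subsequence. To pass to the limit in the nonlinearities I first need strong convergence in a lower-regularity norm; for this I would subtract the equations at two parameters $\varepsilon$ and $\varepsilon'$ and run an $L^2\times H^1$ energy estimate on the difference $(\eta^\varepsilon - \eta^{\varepsilon'}, u^\varepsilon - u^{\varepsilon'})$. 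The quadratic terms are controlled by the Sobolev algebra property together with the uniform high-norm bound, while the residual forcing, consisting of expressions of the form $(\mathcal{J}_\varepsilon - \mathcal{J}_{\varepsilon'})$ applied to functions that are uniformly bounded in high norm, tends to $0$ as $\varepsilon, \varepsilon' \to 0$. Gronwall then yields a Cauchy property in $C([0,T]; L^2\times H^1)$, hence strong convergence to the already-identified weak limit in this space.

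Strong convergence at low regularity combined with the uniform high-regularity bound upgrades, via the interpolation inequality of Lemma \ref{InterpolationInequality}, to strong convergence in $C([0,T]; H^{s'}\times H^{s'+1})$ for every $s' \in [0, s)$. Since $s\geq 2$, I can fix any $s' \in [1, s)$, so the convergence takes place in a Sobolev algebra; combined with the continuity of $A^{-1}$ on every Sobolev space (the remark following Lemma \ref{uniformTime}), each nonlinear product in \eqref{etaEquation}, \eqref{uEquation} passes to the limit. The limit $(\eta, u)$ therefore solves the non-mollified system in the sense of distributions, and attains the initial data by continuity at the lower level.

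The main obstacle is upgrading $(\eta, u)$ from $L^\infty$ to $C$ in time at the top Sobolev level. For $u$ this is essentially automatic: reading off \eqref{uEquation} and using that $A^{-1}$ smooths by two derivatives, the right-hand side lies in $H^{s+1}$ pointwise in time, so $u_t \in L^\infty([0,T]; H^{s+1})$ and $u$ is Lipschitz, hence continuous, with values in $H^{s+1}$. The delicate piece is $\eta$: the transport-type structure of \eqref{etaEquation} only gives $\eta_t \in L^\infty([0,T]; H^{s-1})$, so $\eta$ is a priori only weakly continuous in $H^s$. To promote this to strong continuity I would use the Bona--Smith strategy: apply the preceding construction to the mollified initial data $(\mathcal{J}_\delta \eta_0, \mathcal{J}_\delta u_0)$, producing a smoother solution $(\eta^\delta, u^\delta) \in C([0,T]; H^{s+1}\times H^{s+2})$ on the same time interval (the lifespan in Lemma \ref{uniformTime} depends only on the $H^s\times H^{s+1}$ norm of the data, which stays bounded as $\delta\to 0$), and then combine a difference energy estimate at the top level with Lemma \ref{InterpolationInequality} to deduce $\eta^\delta \to \eta$ in $C([0,T]; H^s)$, transferring the continuity to $\eta$.
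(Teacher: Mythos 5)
Your proposal reaches the same conclusion but takes a genuinely different route in two places, and both of your choices are viable.

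For extracting the strong limit at low regularity, you propose a Cauchy-sequence argument based on an $L^2\times H^1$ difference estimate between two mollified solutions, with the mollifier mismatch appearing as a residual forcing that vanishes as $\varepsilon,\varepsilon'\to 0$. The paper instead observes that $(\eta^\varepsilon_t,u^\varepsilon_t)$ is uniformly bounded in $L^\infty$, applies Arzel\`a--Ascoli to obtain a uniformly convergent subsequence, and then identifies the limit with the weak-$\ast$ limit from the energy bound. Both approaches are standard; yours is more quantitative (it furnishes a rate in $\varepsilon$ directly), while the paper's compactness route is shorter to write since it reuses only the single uniform energy bound and no difference estimate.

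For continuity in time at the top level $H^s\times H^{s+1}$, you correctly identify that only $\eta$ is delicate (the equation for $u_t$ gains regularity through $A^{-1}$, so $u$ is Lipschitz into $H^{s+1}$, while $\eta_t$ only lands in $H^{s-1}$), and you invoke a Bona--Smith approximation from smoothed data $\mathcal{J}_\delta(\eta_0,u_0)$. The paper instead combines weak continuity in time (from the uniform bound and the lower-regularity convergence, quoting Theorem 3.4 of \cite{majdaBertozzi}) with a $\liminf$/$\limsup$ argument on the norm driven by the energy inequality \eqref{finalEnergyBound}: lower semicontinuity gives \eqref{liminfPart}, the energy bound gives \eqref{limsupPart}, and together these establish right-continuity of the norm at $t=0$; continuity at later times then appeals to uniqueness (Theorem~\ref{CDOICTheorem}) and time-reversibility. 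Your Bona--Smith route avoids the forward reference to uniqueness, which is a real structural advantage. However, you have glossed over the step that makes Bona--Smith actually close: the top-level difference estimate for $\eta^\delta-\eta$ contains a loss-of-derivative term of the type $\partial_x^{s+1}\eta^\delta\cdot(u^\delta-u)$ that cannot be handled by integration by parts alone. The estimate closes only because of a balance of mollifier rates: $\|\eta^\delta\|_{H^{s+1}}$ can grow like $\delta^{-1}\|\eta_0\|_{H^s}$, but this is compensated by $\|(1-\mathcal{J}_\delta)u_0\|_{H^1}\lesssim\delta^{s}\|u_0\|_{H^{s+1}}$, so the product is $O(\delta^{s-1})\to 0$ since $s\geq 2$. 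Without tracking those rates explicitly, the Gronwall inequality you invoke does not obviously close, so this is the step you would need to spell out to make the argument complete.
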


\begin{proof}
The energy estimate we have established shows that $(\eta^\epsilon, u^\epsilon)$ is uniformly bounded in 
$C([0,T]: H^s\times H^{s+1}),$ with this $T$ being independent of $\epsilon.$ 
This implies that $(\eta^{\epsilon}_t, u^\epsilon_t)$ is uniformly bounded with respect to $\epsilon$ in 
$L^{\infty}\times L^{\infty}$ as well, when $s\geq2.$  This implies that the sequence $(\eta^\epsilon, u^\epsilon)$ is 
an equicontinuous family, and thus by the Arzela-Ascoli theorem there exists a subsequence (which we do not relabel)
$(\eta^\epsilon, u^\epsilon)$ which converges uniformly to some $(\eta, u)\in (C([0,2\pi]\times[0,T]))^{2}.$ 
We now establish regularity of this $(\eta,u)$ and
that $(\eta,u)$ is a solution of the non-regularized initial value problem.

Since the sequence $(\eta^{\epsilon},u^{\epsilon})$ is uniformly bounded with respect to both $\epsilon$ and $t$ in 
$H^{s}\times H^{s+1},$ and since the unit ball of a Hilbert space is weakly compact,  for any $t\in[0,T]$ we may find
a weak limit in $H^{s}\times H^{s+1}.$  Clearly this limit must again equal $(\eta,u),$ and thus we conclude that for every
$t,$ $(\eta(\cdot,t),u(\cdot,t))\in H^{s}\times H^{s+1},$ and that $(\eta,u)\in L^{\infty}([0,T];H^{s}\times H^{s+1}).$

Since $(\eta^{\epsilon},u^{\epsilon})$ converges to $(\eta,u)$ in $(C([0,2\pi]\times[0,T]))^{2},$ the convergence also holds in
$C([0,T];H^{0}\times H^{0}).$  Then using the uniform bound on $(\eta^{\epsilon},u^{\epsilon})$ 
provided by the proof of Lemma \ref{uniformTime}, and also using Lemma 
\ref{InterpolationInequality}, we see that the convergence also holds in $C([0,T];H^{s'}\times H^{s'+1}),$ for any $0\leq s'<s.$

We have concluded so far that the limit $(\eta,u)\in C([0,T];H^{s'}\times H^{s'+1})\cap L^{\infty}([0,T];H^{s}\times H^{s+1}).$
We can in fact show that $(\eta,u)\in C([0,T];H^{s}\times H^{s+1}),$ but we will delay this until after showing that $(\eta,u)$
solves the unregularized initial value problem.

To show that $(\eta,u)$ satisfies the appropriate system, we use the fundamental theorem of calculus on the approximate solutions,
\begin{align}\nonumber
   \eta^\epsilon(\cdot,t)
    =&\eta_0+\int_{0}^{t}
    -\frac{r_0}{2} u^{\epsilon}_x -\frac{1}{2}\eta^{\epsilon}u^{\epsilon}_x
    -\mathcal{J}_{\epsilon}\left((\mathcal{J}_{\epsilon}\eta^\epsilon_x) u^\epsilon\right) 
    \ d\tau,\\
    u^\epsilon(\cdot,t)\nonumber
    =&u_0+\int_{0}^{t} A^{-1} \left[
    -\overline{\beta}\eta^{\epsilon}_x-u^{\epsilon}u^{\epsilon}_x-\kappa u^{\epsilon}
    +\frac{\gamma\overline{\beta}r_0}{2}u^{\epsilon}_{xx}\right]\ d\tau.
\end{align}
We have established sufficient regularity to pass to the limit under the integrals, and thus we have
\begin{align}\nonumber
   \eta(\cdot,t)
    =&\eta_0+\int_{0}^{t} 
    \left[-\frac{r_0}{2} u_x -\frac{1}{2}\eta u_x-\eta_x u 
    \right]\ d\tau,\\
    u(\cdot,t)\nonumber
    =&u_0+\int_{0}^{t} A^{-1} 
    \left[-\overline{\beta}\eta_x-uu_x-\kappa u+\frac{\gamma\overline{\beta}r_0}{2}u_{xx}\right]\ d\tau.
\end{align}
Taking the derivative of these equations with respect to time, we see that $(\eta,u)$ does indeed satisfy the unregularized initial
value problem.

We now may demonstrate $(\eta,u)\in C([0,T];H^{s}\times H^{s+1}).$  By a standard argument (see, for example, the proof of
Theorem 3.4 of \cite{majdaBertozzi}) the uniform bound on solutions and the continuity in time in $H^{s'}\times H^{s}$ for 
all $0\leq s'<s$ implies weak continuity in time, i.e. $(\eta,u)\in C_{W}([0,T];H^{s}\times H^{s+1}).$  Since weak convergence
plus convergence of the norm implies convergence in a Hilbert space, all that remains to show, then, is continuity of the
$H^{s}\times H^{s+1}$ norm with respect to time.  To establish continuity of the norm, it is enough to establish right-continuity
at the initial time, $t=0.$ The general case (i.e. continuity of the norm at times other than the initial time) follows by considering
any other time to be a new initial time; by uniqueness of solutions, which is part of the content of Theorem \ref{CDOICTheorem} 
below, the solution starting from some time $t_{*}\in[0,T)$ is the same as the solution we have already found starting from $t=0.$
In this way, establishing right-continuity of the norm at the initial time demonstrates right-continuity of the norm at any time
in $[0,T).$  Left-continuity of the norm follows from time-reversibility of the equations.

So, as we have said, all that remains to establish is right-continuity of the $H^{s}\times H^{s+1}$ norm of the solution at 
$t=0.$  Weak continuity implies
\begin{equation}\label{liminfPart}
\liminf_{t\rightarrow 0^{+}}\|(\eta,u)\|_{H^{s}\times H^{s+1}}\geq \|(\eta_{0},u_{0})\|_{H^{s}\times H^{s+1}}.
\end{equation}
Similarly, for any $t\in[0,T],$ we have
\begin{equation}\nonumber
\limsup_{\epsilon\rightarrow0^{+}}\|(\eta^{\epsilon}(\cdot,t),u^{\epsilon}(\cdot,t))\|_{H^{s}\times H^{s+1}}\geq 
\|(\eta(\cdot,t),u(\cdot,t))\|_{H^{s}\times H^{s+1}}.
\end{equation}
Then, the energy estimate \eqref{finalEnergyBound} implies 
\begin{multline}\label{limsupPart}
\|(\eta_{0},u_{0})\|_{H^{s}\times H^{s+1}}\geq\limsup_{t\rightarrow 0^{+}}
\limsup_{\epsilon\rightarrow 0^{+}}\|(\eta^{\epsilon}(\cdot,t),u^{\epsilon}(\cdot,t))\|_{H^{s}\times H^{s+1}}
\\
\geq \limsup_{t\rightarrow 0^{+}}\|(\eta(\cdot,t),u(\cdot,t))\|_{H^{s}\times H^{s+1}}.
\end{multline}
Combining \eqref{liminfPart} and \eqref{limsupPart}, we have the conclusion.  This completes the proof of the theorem.
\end{proof}

Now, we will seek to establish the uniqueness of solutions $(\eta, u),$ and continuous dependence on the initial data.
\begin{thm}\label{CDOICTheorem}  
Let $(\eta_{0},u_{0})\in H^{s}\times H^{s+1}$ and $(\eta_{0}^{*},u_{0}^{*})\in H^{s}\times H^{s+1}$ be given.
Let $K>0$ be given such that 
\begin{equation}\nonumber
\|(\eta_{0},u_{0})\|_{H^{s}\times H^{s+1}}<K,\qquad \|(\eta_{0}^{*},u_{0}^{*})\|_{H^{s}\times H^{s+1}}<K.
\end{equation}
Let $T>0$ be such that there is a solution $(\eta,u)\in C([0,T];H^{s}\times H^{s+1})$ solving \eqref{etaEquation},
\eqref{uEquation} with initial data $\eta(\cdot,0)=\eta_{0},$ $u(\cdot,0)=u_{0}$ and such that there is a solution
$(\eta^{*},u^{*})\in C([0,T];H^{s}\times H^{s+1})$ solving \eqref{etaEquation}, \eqref{uEquation} with initial data
$\eta(\cdot,0)=\eta_{0}^{*},$ $u(\cdot,0)=u_{0}^{*},$ and such that
\begin{equation}\nonumber
\sup_{t\in[0,T]}\|(\eta(\cdot,t),u(\cdot,t))\|_{H^{s}\times H^{s+1}}
\leq K,\quad \sup_{t\in[0,T]}\|(\eta^{*}(\cdot,t),u^{*}(\cdot,t))\|_{H^{s}\times H^{s+1}}\leq K.
\end{equation}
 Then there exists $c>0,$ depending only on $K$ and $s,$ such that for any $s'\in[0,s),$
\begin{multline}\label{CDOICEstimate}
\sup_{t\in[0,T]}\left\|(\eta(\cdot,t)-\eta^{*}(\cdot,t),u(\cdot,t)-u^{*}(\cdot,t))\right\|_{H^{s'}\times H^{s'+1}}
\\
\leq c\left(\|(\eta_{0}-\eta_{0}^{*}, u_{0}-u_{0}^{*})\|_{H^{0}\times H^{1}}\right)^{1-s'/s}.
\end{multline}
In particular, solutions of the initial value problem for \eqref{etaEquation}, \eqref{uEquation} are unique.
\end{thm}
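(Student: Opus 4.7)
The plan is to prove \eqref{CDOICEstimate} by an energy estimate performed at low regularity $H^{0}\times H^{1}$ on the difference of the two solutions, and then to promote the result to $H^{s'}\times H^{s'+1}$ for $s'\in[0,s)$ using Lemma \ref{InterpolationInequality} together with the uniform bound $K$. Uniqueness is then immediate from \eqref{CDOICEstimate} by taking identical initial data.

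Set $\tilde{\eta}=\eta-\eta^{*}$ and $\tilde{u}=u-u^{*}$. Subtracting the two copies of \eqref{etaEquation}, \eqref{uEquation}, and using identities of the form $\eta u_{x}-\eta^{*}u^{*}_{x}=\eta\tilde{u}_{x}+\tilde{\eta}u^{*}_{x}$, one obtains
\begin{equation*}
\tilde{\eta}_{t}=-\tfrac{r_{0}}{2}\tilde{u}_{x}-\tfrac{1}{2}\eta\tilde{u}_{x}-\tfrac{1}{2}u^{*}_{x}\tilde{\eta}-u\tilde{\eta}_{x}-\eta^{*}_{x}\tilde{u},
\end{equation*}
\begin{equation*}
A\tilde{u}_{t}=-\overline{\beta}\tilde{\eta}_{x}-u\tilde{u}_{x}-u^{*}_{x}\tilde{u}-\kappa\tilde{u}+\tfrac{\gamma\overline{\beta}r_{0}}{2}\tilde{u}_{xx},
\end{equation*}
where $A=1-\tfrac{(4\overline{\alpha}+r_{0})r_{0}}{8}\partial_{xx}$ has constant coefficients in this section. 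The natural energy, obtained by pairing $A\tilde{u}_{t}$ with $\tilde{u}$ and integrating by parts, is
\begin{equation*}
\mathcal{E}(t)=\tfrac{1}{2}\norm{\tilde{\eta}}_{L^{2}}^{2}+\tfrac{1}{2}\norm{\tilde{u}}_{L^{2}}^{2}+\tfrac{(4\overline{\alpha}+r_{0})r_{0}}{16}\norm{\tilde{u}_{x}}_{L^{2}}^{2},
\end{equation*}
which under the physical positivity of $r_{0}$ and $\overline{\alpha}$ is equivalent to $\norm{\tilde{\eta}}_{H^{0}}^{2}+\norm{\tilde{u}}_{H^{1}}^{2}$.

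Differentiating $\mathcal{E}$ in time, the only structurally delicate term is the coupling $-\overline{\beta}\int\tilde{\eta}_{x}\tilde{u}\,dx$, which one integration by parts converts to $\overline{\beta}\int\tilde{\eta}\tilde{u}_{x}\,dx$, bounded by $\mathcal{E}$ via Young's inequality; the linear coupling $-\tfrac{r_{0}}{2}\int\tilde{\eta}\tilde{u}_{x}\,dx$ is absorbed the same way. The transport contributions $-\int u\tilde{\eta}\tilde{\eta}_{x}\,dx$ and $-\int u\tilde{u}\tilde{u}_{x}\,dx$ each integrate by parts to produce $\tfrac{1}{2}\int u_{x}\tilde{\eta}^{2}\,dx$ and $\tfrac{1}{2}\int u_{x}\tilde{u}^{2}\,dx$ respectively, which are controlled by $\norm{u_{x}}_{L^{\infty}}\mathcal{E}$. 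Every remaining quadratic term is a product of one factor from $\{\tilde{\eta},\tilde{u},\tilde{u}_{x}\}$ with an $L^{\infty}$ coefficient involving at most a first derivative of $\eta$, $u$, $\eta^{*}$, or $u^{*}$; the Sobolev embedding $H^{s-1}\hookrightarrow L^{\infty}$ (using $s\geq 2$) bounds each such coefficient by $cK$. The viscous and damping contributions $\tfrac{\gamma\overline{\beta}r_{0}}{2}\int\tilde{u}_{xx}\tilde{u}\,dx=-\tfrac{\gamma\overline{\beta}r_{0}}{2}\norm{\tilde{u}_{x}}_{L^{2}}^{2}$ and $-\kappa\norm{\tilde{u}}_{L^{2}}^{2}$ both have favorable sign and may be discarded. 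Thus $d\mathcal{E}/dt\leq c(K)\mathcal{E}$, and Gronwall yields $\sup_{[0,T]}\mathcal{E}(t)\leq e^{c(K)T}\mathcal{E}(0)$, giving control of $\norm{(\tilde{\eta},\tilde{u})}_{H^{0}\times H^{1}}$ by the $H^{0}\times H^{1}$ norm of the initial difference.

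For any $s'\in[0,s)$, Lemma \ref{InterpolationInequality} applied componentwise gives
\begin{equation*}
\norm{\tilde{\eta}}_{H^{s'}}\leq c\norm{\tilde{\eta}}_{H^{0}}^{1-s'/s}\norm{\tilde{\eta}}_{H^{s}}^{s'/s},\qquad \norm{\tilde{u}}_{H^{s'+1}}\leq c\norm{\tilde{u}}_{H^{1}}^{1-s'/s}\norm{\tilde{u}}_{H^{s+1}}^{s'/s},
\end{equation*}
and combining with the uniform bound $\norm{\tilde{\eta}}_{H^{s}}+\norm{\tilde{u}}_{H^{s+1}}\leq 2K$ produces \eqref{CDOICEstimate}. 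Uniqueness follows at once: setting $\eta_{0}=\eta_{0}^{*}$ and $u_{0}=u_{0}^{*}$ forces $\mathcal{E}(0)=0$, hence $\mathcal{E}\equiv 0$ on $[0,T]$. I do not anticipate any serious obstacle in this plan; by working $s$ derivatives below the regularity of the solutions there is ample room to close the difference estimate without any mollification, and the operator $A^{-1}$ intervenes only implicitly through the $H^{1}$-equivalent energy, so no analysis of $A^{-1}$ on the difference is needed.
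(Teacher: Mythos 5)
Your proof is correct and reaches the same conclusion, but you take a genuinely different route through the operator $A$ than the paper does. The paper writes $\tilde u_t = A^{-1}[\cdots]$ and pairs with $\tilde u$ and $\tilde u_x$ against the flat energy $Z(t)=\|\tilde\eta\|_{L^2}^2+\|\tilde u\|_{L^2}^2+\|\tilde u_x\|_{L^2}^2$, then leans on the two-derivative smoothing of $A^{-1}$ (it maps $H^\ell\to H^{\ell+2}$ for constant $r_0$) to bound each term $\|z_k\|_{H^1}$, $k=4,\dots,7$. You instead leave $A$ on the left, build it into the energy $\mathcal{E}$ with the coefficient $\tfrac{(4\bar\alpha+r_0)r_0}{16}$ in front of $\|\tilde u_x\|^2$, and never touch $A^{-1}$ at all; the dispersive term $-\tfrac{(4\bar\alpha+r_0)r_0}{8}\tilde u_{xxt}$ is folded into the time derivative of $\mathcal{E}$ by a single integration by parts and self-adjointness of $A$. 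The trade-off: your energy is automatically equivalent to the $H^0\times H^1$ norm only because $r_0,\bar\alpha>0$, which is a (physical) positivity hypothesis the paper also needs implicitly for $A$ to be invertible; in exchange you get the favorable signs from $-\kappa\|\tilde u\|^2$ and $-\tfrac{\gamma\bar\beta r_0}{2}\|\tilde u_x\|^2$ for free, whereas the paper treats those as additional bounded terms. Both estimates close the same way and produce the same Gronwall-plus-interpolation conclusion. One cosmetic note: your second interpolation inequality, $\|\tilde u\|_{H^{s'+1}}\leq c\|\tilde u\|_{H^1}^{1-s'/s}\|\tilde u\|_{H^{s+1}}^{s'/s}$, is not literally an instance of Lemma \ref{InterpolationInequality} (which has base index $0$), though it is the standard generalization; it is worth saying a word about that, as the paper also uses it without remark.
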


\begin{proof}
We define an energy for the difference of the two solutions, $Z,$ as
  \begin{equation}\nonumber
    Z(t) 
    = \int_{0}^{t} (\eta-\eta^*)^2 + (u-u^*)^2 + (u_x - u^*_x)^2\ dx.
 \end{equation}    
 We will estimate the growth of $Z.$
Its time derivative is 
 \begin{equation}\nonumber
     \frac{dZ(t)}{dt}= 2 \int_{0}^{t} (\eta-\eta^*)(\eta_t-\eta^*_t) + (u-u^*)(u_t-u^*_t) + (u_x-u^*_x)(u_x-u^*_x)_t \ dx. 
 \end{equation}
 We expand the time-derivatives as follows:
\begin{equation}\nonumber
    (\eta_t-\eta^*_t)
    =\  -\frac{r_0}{2}(u_x-u^*_x) - \frac{1}{2}(\eta u_x-\eta^*u^*_x) - (\eta_x u - \eta^*_x u^*)\nonumber
    = \sum_{k=1}^{3} z_k,
    \end{equation}
    \begin{multline}\nonumber
    (u_t -u^*_t)
    = \ A^{-1} \bigg[\overline{\beta}(\eta_x-\eta^*_x) +  (uu_x-u^*u^*_x)
     + \kappa(u-u^*) - \frac{\gamma\overline{\beta}r_0}{2}(u_{xx}-u^*_{xx})\bigg]\\
    = \sum_{k=4}^{7} z_k,
    \end{multline}
    \begin{equation}\nonumber
    (u_x-u^*_x)_t = \ (u_t-u^*_t)_x
    = \sum_{k=4}^{7} (z_k)_x.
\end{equation}
Thus, we have
\begin{equation}\label{dz}
    \frac{dZ(t)}{dt} = 2 \int_{0}^{t}\left[ (\eta -\eta^*)\sum_{k=1}^{3} z_k 
    +(u -u^*)\sum_{k=4}^{7} z_k + (u_x -u^*_x)\sum_{k=4}^{7} (z_k)_x\right]\ dx.
\end{equation}
 
 We begin by estimating the first term. Bounding each factor in $L^{2},$ we have 
 \begin{equation}\nonumber
     2 \int_{0}^{t} (\eta-\eta^*)(z_1)dx
    \leq c\norm{\eta-\eta^*}_{L^2}\norm{z_1}_{L^2}.
 \end{equation}
 Since $\norm{z_1}_{L^2}\leq c\norm{u-u^*}_{H^1}$
we also have
     $\norm{z_1}_{L^2} 
     \leq cZ(t)^{\frac{1}{2}}.$ 
This then implies
 \begin{equation}\nonumber
     2 \int_{0}^{t} (\eta-\eta^*)(z_1)dx
    \leq cZ(t).
 \end{equation}
 Next, we look at the term involving  $z_2$. We again bound each factor in $L^{2}:$
 \begin{equation}\nonumber
     2 \int_{0}^{t} (\eta-\eta^*)(z_2)dx
    \leq c \norm{\eta-\eta^*}_{L^2}\norm{z_2}_{L^2}.
 \end{equation}
Here we may bound $\norm{z_2}_{L^2}$ as follows:
 \begin{equation}\nonumber
     \norm{z_2}_{L^2}
     \leq  \frac{1}{2}\norm{(\eta u_x+(\eta^* u_x-\eta^* u_x)   -\eta^*u^*_x)}_{L^2}.
 \end{equation}
 Using the triangle inequality, this may be bounded as
 \begin{equation}\nonumber
     \norm{z_2}_{L^2} \leq c \norm{\eta u_x-\eta^* u_x}_{L^2} +c\norm{\eta^* u_x - \eta^* u^*_x}_{L^2}.
 \end{equation}
 We may bring out $u_x$ from the first term and $\eta^*$ from the second term:
 \begin{equation}\nonumber
     \norm{z_2}_{L^2} \leq c\norm{\eta-\eta^*}_{L^2}\norm{u_x}_{L^\infty} + c\norm{\eta^*}_{L^\infty}\norm{u - u^*}_{H^1}. 
 \end{equation}
By Sobolev embedding, since $s\geq1,$ we have
 $\norm{u_x}_{L^\infty} \leq c\norm{u}_{H^{s+1}}$ 
 and 
 $\norm{\eta^*}_{L^\infty} \leq c\norm{\eta}_{H^s}.$
We therefore have
 \begin{equation}\nonumber
     \norm{z_2}_{L^2} \leq c (\norm{u}_{H^{s+1}}+ \norm{\eta}_{H^s})Z(t)^{\frac{1}{2}}. 
     \end{equation}
Using the uniform bound on the solutions, we then have
\begin{equation}\nonumber
    2 \int_{0}^{t} (\eta-\eta^*)(z_2)dx \leq c Z(t).
\end{equation}
Next, we will look at the third term.
We begin by adding and subtracting in $z_{3},$
 \begin{equation}\nonumber
     z_3 = \eta^*_x u^* +(\eta^*_x u - \eta^*_x u) - \eta_x u.
 \end{equation}
We integrate by parts once, finding
\begin{equation}\nonumber
    2 \int_{0}^{t} (\eta-\eta^*)(z_3)dx
    = - 2 \int_{0}^{t} (\eta-\eta^*)(\eta^*)(u^*_x- u_x) dx  -2 \int_{0}^{t}\frac{1}{2} (\eta-\eta^*)^{2}(u_x)\ dx.
\end{equation}
This may then be bounded as
\begin{multline}\nonumber
    2 \int_{0}^{t} (\eta-\eta^*)(z_3)dx\\
    \leq c\norm{\eta-\eta^*}_{L^2}\norm{\eta^*}_{L^\infty}\norm{u^*- u}_{H^1} + c\norm{\eta-\eta^*}^{2}_{L^2}\norm{u_x}_{L^\infty}.
\end{multline}
Again using Sobolev embedding and the uniform bound, we have
\begin{equation}\nonumber
        2 \int_{0}^{t} (\eta-\eta^*)(z_3)dx
        \leq c Z(t).
\end{equation}

 To complete the proof, it is sufficient to prove 
\begin{equation}\nonumber
\norm{z_k}_{H^1} \leq cZ(t)^{\frac{1}{2}},\ \text{for}\  k = {4,5,6,7}.
\end{equation}
We begin with $z_{4}.$
Recall $A^{-1}$ is smoothing by two derivatives.  We may then say
 \begin{equation}\nonumber
     \norm{z_4}_{H^1} 
     \leq c\norm{\eta-\eta^*}_{H^0} 
     \leq cZ(t)^{\frac{1}{2}}.
 \end{equation}
To estimate $\norm{z_5}_{H^1},$ we begin by adding and subtracting,
 \begin{equation}\nonumber
    \norm{z_5}_{H^1} = \norm{A^{-1}(uu_x+(u^* u_x - u^* u_x)     -u^*u^*_x)}_{H^1}.
 \end{equation}
We use the triangle inequality and $\norm{\cdot}_{H^1} \leq \norm{\cdot}_{H^2}$, so that  we have
\begin{equation}\nonumber
    \norm{z_5}_{H^1} \leq \norm{A^{-1}(uu_x - u^* u_x)}_{H^2} + \norm{A^{-1}(u^* u_x -u^*u^*_x)}_{H^2}.
\end{equation}
Using the smoothing effect of $A^{-1},$ this becomes
\begin{equation}\nonumber
    \norm{z_5}_{H^1} 
    \leq c\norm{u - u^*}_{H^0}\norm{u_x}_{L^\infty} + c\norm{u^*}_{L^\infty}\norm{u_x -u^*_x}_{H^0}.
\end{equation}
Using the definition of $Z(t),$ Sobolev embedding, and the uniform bound, we conclude
\begin{equation}\nonumber
   \norm{z_5}_{H^1} 
   \leq c Z(t)^{\frac{1}{2}}.
\end{equation}
Proceeding similarly, we also have
 \begin{eqnarray*}
     \norm{z_6}_{H^1} 
     \leq cZ(t)^{\frac{1}{2}} \quad \text{and} \quad
     \norm{z_7}_{H^1} 
     \leq cZ(t)^{\frac{1}{2}}.
 \end{eqnarray*}
 
Hence, we may conclude that (\ref{dz}) is bounded as follows:
\begin{equation}\nonumber
    \frac{dZ(t)}{dt} 
    \leq cZ(t).
\end{equation}
By Gronwall's inequality, we therefore have that $Z(t)\leq e^{ct}Z(0)\leq e^{cT}Z(0).$  Together with 
Lemma \ref{InterpolationInequality},
this implies \eqref{CDOICEstimate}.
Uniqueness follows by taking $Z(0)=0,$ as this implies $Z(t)=0$ for $t>0.$  
\end{proof}

\section{Existence by the Cauchy-Kowalevski Theorem}\label{ACKSection}

In this section, we prove an existence theorem for the system \eqref{originalEtaEquation}, \eqref{originalUEquation},
making use of an abstract Cauchy-Kowalevski theorem.  This uses function spaces of analytic functions based on the Wiener
algebra.  We take this approach to proving existence because it is not clear that the system is well-posed in spaces of finite
regularity, i.e. Sobolev spaces. 

Consider the following simplified system, which is based upon the system \eqref{originalEtaEquation}, 
\eqref{originalUEquation} with $r_{0x}\neq0,$ 
keeping only the terms with the most derivatives, and simplifying to the constant coefficient case:
\begin{equation}\label{illPosedSystemExample}
\eta_{t}=u_{x},\qquad u_{t}+u_{xxt}=\eta_{xx}.
\end{equation}
This system is ill-posed, as can be demonstrated with a calculation in Fourier space.  Specifically, for any $k\in\mathbb{N},$
we have the solution 
\begin{equation}\label{etaIllPosed}
\eta(x,t)=\exp\left\{ikx+t\left(\frac{1-i}{\sqrt{2}}\right)\frac{k^{3/2}}{\sqrt{1+k^{2}}}\right\}+c.c,
\end{equation}
where as usual ``c.c.'' denotes the complex conjugate of the preceding.
The other component of the solution to this system, $u,$ may be inferred from this formula for $\eta$ and from the relation
$u_{x}=\eta_{t}.$  The ill-posedness of the initial value problem for the system \eqref{illPosedSystemExample} may be seen
directly from \eqref{etaIllPosed}, as the exponential growth rate in time grows without bound as $k$ goes to infinity.

The above heuristic argument suggests ill-posedness in Sobolev spaces.  Furthermore, while the derivation of the model
in \cite{mitsotakis1}
is based  an asymptotic expansion of the velocity potential, Boussinesq models are frequently derived by instead making long-wave 
approximations;  in such a long-wave model, one
typically expects coefficients like $r_{0}(x)$ to be homogenized in the long-wave limit, see for instance \cite{dougShariCoops} 
for an example of this phenomenon in the case of long-wave limits of polyatomic lattices.  In this sense, while the system
\eqref{originalEtaEquation}, \eqref{originalUEquation} is more general, the system \eqref{etaEquation}, \eqref{uEquation}
may be more fundamental.

\subsection{The abstract Cauchy-Kowalevski theorem of Kano and Nishida}

The following abstract Cauchy-Kowalevski theorem is proved by Kano and Nishida \cite{kanoNishida}; other, related abstract
Cauchy-Kowalevski theorems may be found in \cite{caflisch}, \cite{nirenberg}, \cite{nishida}.

\begin{thm}[Cauchy-Kowalevski Theorem] \label{CKtheorem} Let $\{B_\rho\}_{\rho\geq0}$ be a scale of Banach spaces, 
such that for any $\rho$,  $B_\rho$ is a linear subspace of $B_0$. Suppose that 
\begin{equation}\label{rhoConditions}
    B_{\rho} \subset B_{\rho'}, \quad \norm{\cdot}_{\rho'} \leq \norm{\cdot}_{\rho}\quad \quad \text{for}\ \rho' \leq \rho,
\end{equation}
where $\norm{\cdot}_{\sigma}$ denotes the norm of $B_{\sigma}$ for any $\sigma\geq0.$ We assume the following conditions:\\

\noindent
\textbf{(H1)} There exist constants $R>R_0>0$, $T>0,$ and $\rho_{0}>0,$ 
such that for any $0\leq \rho' <\rho <\rho_0$, 
$(z,t,s) \mapsto F(z,t,s)$
is a continuous operator of 
\begin{equation}\nonumber
    \{z \in B_{\rho}:\norm{z}_{\rho}<R\} \times \{0\leq t\leq T\}\times \{0\leq s\leq T \} \quad \text{into} \ B_{\rho'}.
\end{equation} 
\textbf{(H2)} For every $\rho<\rho_0$,  $F(0,t,s)$ is a continuous function of t with values in $B_\rho$ and satisfies with a fixed constant $K$,
\begin{equation}\nonumber
    \norm{F(0,t,s)}_{\rho} 
    \leq \frac{K}{\rho_0-\rho}.
\end{equation}
\textbf{(H3)} For any $0\leq\rho'<\rho<\rho_0$ and all $z,\tilde{z} \in B_{\rho}$, with $\norm{z}_{\rho}<R$, $\norm{\tilde{z}}_{\rho}<R$, F satisfies the following for all $t, s$ in $[0,T]$,
\begin{equation}\nonumber
    \norm{F(z,t,s)-F(\tilde{z},t,s)}_{\rho'} \leq \frac{C \norm{z-\tilde{z}}_{\rho}}{\rho-\rho'}
\end{equation}
with a constant $C$ independent of \{$t,s,z,\tilde{z},\rho$,$\rho'$\}.\\
If \textbf{(H1)-(H3)} hold, there exists a positive constant $\lambda$ such that we have the unique continuous solution of
\begin{equation}\label{kanoNishidaProblem}
z(t)=z_{0}(t)+\int_{0}^{t}F(z(s),t,s)\ ds,
\end{equation}
 for all $0 < \rho <\rho_0$ and $|t|<\lambda(\rho_0-\rho),$
 with value in $B_{\rho}.$ 
\end{thm}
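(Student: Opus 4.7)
The plan is to construct the solution of \eqref{kanoNishidaProblem} via Picard iteration on the scale of Banach spaces $\{B_{\rho}\}$, using the classical Nirenberg--Nishida trick of trading a small loss of analyticity radius at each iterate for positive powers of $|t|$. Define
\begin{equation}\nonumber
z^{(0)}(t):=z_{0}(t),\qquad z^{(n+1)}(t):=z_{0}(t)+\int_{0}^{t}F(z^{(n)}(s),t,s)\,ds,
\end{equation}
and seek the solution as $z(t)=\lim_{n\to\infty}z^{(n)}(t)$ in $B_{\rho}$ for times $|t|<\lambda(\rho_{0}-\rho)$, with the constant $\lambda>0$ to be fixed at the end of the argument. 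The iterates are well defined by (H1), which guarantees $F(z^{(n)}(s),t,s)\in B_{\rho'}$ whenever $z^{(n)}(s)$ lies in the ball of radius $R$ in $B_{\rho}$ with $\rho'<\rho$, while (H2) provides the baseline bound $\norm{z_{0}(t)}_{\rho}\leq K/(\rho_{0}-\rho)$ that drives all subsequent estimates.

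The heart of the argument is the inductive estimate
\begin{equation}\nonumber
\norm{z^{(n+1)}(t)-z^{(n)}(t)}_{\rho}\leq A\left(\frac{Ce\,|t|}{\rho_{0}-\rho}\right)^{n},
\end{equation}
valid for all $\rho<\rho_{0}$ and $|t|<\lambda(\rho_{0}-\rho)$, where $A$ depends only on $K$, $R$, and $\rho_{0}$. The key technical device is that, when passing from level $n-1$ to level $n$, one does \emph{not} apply (H3) with a fixed pair $(\rho,\rho_{0})$, since iterating such a bound $n$ times produces an unsummable $n^{n}$ growth. Instead, one introduces a geometric partition of intermediate radii $\rho_{k}=\rho+k(\rho_{0}-\rho)/(n+1)$ for $k=0,1,\dots,n+1$, and applies (H3) at the $k$-th step of the $n$-fold nested time integration with the pair $(\rho_{k},\rho_{k+1})$. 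Each application contributes a factor $C(n+1)/(\rho_{0}-\rho)$, the nested time integration contributes $|t|^{n}/n!$, and Stirling's inequality $(n+1)^{n}/n!\leq e^{n+1}$ then yields the displayed estimate. A parallel induction, using (H2) to control $z^{(0)}$ and the inductive bound to control the corrections, keeps each $z^{(n)}(t)$ in the ball of radius $R$, provided $\lambda$ is chosen sufficiently small.

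Once the inductive estimate is in hand, choosing $\lambda<1/(Ce)$ makes the series $\sum_{n}(Ce\lambda)^{n}$ convergent, so the iterates form a Cauchy sequence in $B_{\rho}$ for each fixed $\rho<\rho_{0}$ and $|t|<\lambda(\rho_{0}-\rho)$. The limit $z(t)$ satisfies \eqref{kanoNishidaProblem} by passing to the limit under the integral, continuity of $F$ from (H1) justifying this step, and uniqueness is obtained by applying the same loss-of-radius scheme to the difference of two candidate solutions and closing a Gronwall-type argument on the resulting majorizing series. The principal obstacle is precisely the singular $1/(\rho-\rho')$ factor in (H3): without the geometric partitioning of intermediate radii and the tight time-radius coupling $|t|<\lambda(\rho_{0}-\rho)$, the combinatorial accumulation of this singularity across the $n$ levels of iteration would prevent convergence, and it is the delicate compatibility of these choices that makes the abstract Cauchy--Kowalevski scheme work.
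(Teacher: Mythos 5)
The paper does not actually prove Theorem \ref{CKtheorem}; it is stated and used as an imported result, with the proof attributed to Kano and Nishida \cite{kanoNishida} (and related references \cite{caflisch}, \cite{nirenberg}, \cite{nishida}). So there is no in-paper proof to compare against, and you should be aware that what you have reconstructed is the argument from the cited literature, not a lemma the authors establish.

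That said, your sketch is a faithful outline of the canonical Nirenberg--Nishida scheme, which is indeed the engine behind the Kano--Nishida theorem: Picard iteration with a variable loss of radius, an arithmetic partition of intermediate radii $\rho=\rho_0'<\rho_1'<\cdots<\rho_n'$ so that each application of the Lipschitz bound \textbf{(H3)} costs $C(n+1)/(\rho_0-\rho)$ rather than a fixed singular constant, the nested time integrals supplying a compensating $|t|^n/n!$, and Stirling's bound $(n+1)^n/n!\leq e^{n+1}$ closing the induction and giving a geometric series on the cone $|t|<\lambda(\rho_0-\rho)$ with $\lambda\lesssim 1/(Ce)$. That is the right picture.

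One genuine error of reading, though, which propagates through your base case: you assert that \textbf{(H2)} gives $\norm{z_0(t)}_{\rho}\leq K/(\rho_0-\rho)$, and later say \textbf{(H2)} is ``used to control $z^{(0)}$.'' But \textbf{(H2)} is a hypothesis on $F(0,t,s)$, not on $z_0(t)$. The base step of the induction, namely bounding $z^{(1)}(t)-z^{(0)}(t)=\int_0^t F(z_0(s),t,s)\,ds$, requires you to write $F(z_0(s),t,s)=\bigl[F(z_0(s),t,s)-F(0,t,s)\bigr]+F(0,t,s)$ and apply \textbf{(H3)} to the bracket and \textbf{(H2)} to the remainder; for the \textbf{(H3)} piece you additionally need a uniform bound on $\norm{z_0(s)}_{\rho}$ for $\rho$ near $\rho_0$, which is the role of the otherwise-unused constant $R_0<R$ appearing in \textbf{(H1)} (i.e.\ there is an implicit hypothesis that $z_0$ takes values in the ball of radius $R_0$). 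Your scheme works once this is fixed, but as written the initial step of the induction is not correctly grounded in the stated hypotheses.
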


\begin{remark} We have stated the conclusion as in \cite{kanoNishida}, but it can be rephrased in a way we will find more helpful.
The existence of $\lambda$ and the conditions $0<\rho<\rho_{0}$ and $|t|<\lambda(\rho_{0}-\rho)$ are equivalent to the existence
of an upper bound on the time of existence for solutions.  Namely, the solution can be continued to a time $t>0$ 
as long as there exists a value
$\rho\in(0,\rho_{0})$ such that $t<\lambda(\rho_{0}-\rho).$  Thus the solution exists on the interval $[0,T),$ where 
$T=\lambda\rho_{0}.$  In Theorem \ref{secondMain} below, rather than concluding the existence of $\lambda,$ we will conclude
the existence of $T.$
\end{remark}

\begin{remark}
The form of the equation \eqref{kanoNishidaProblem} that Kano and Nishida consider is clearly intended to allow for 
semigroups from linear operators, such as would appear in a parabolic problem.  In our intended application, we have
no such semigroup present, so we do not need both variables $s$ and $t.$  In particular, adapting the initial value
problem for the system \eqref{originalEtaEquation}, \eqref{originalUEquation} to the form \eqref{kanoNishidaProblem},
we get $F=(F_{1},F_{2}),$ where $F_{1}$ and $F_{2}$ are given by (suppressing dependence on the spatial variable)
\begin{equation}\label{f1}
F_{1}((\eta(s),u(s)),t,s)=-\frac{1}{2}(r_{0}+\eta(s))u_{x}(s)-(r_{0}+\eta(s))_{x}u(s),
\end{equation}
\begin{multline}\label{f2}
F_{2}((\eta(s),u(s)),t,s)=A^{-1}\Bigg[
-(\bar{\beta}\eta(s))_{x}-u(s)u_{x}(s)\\
-\frac{(3\bar{\alpha}+r_{0})r_{0x}}{2}(\bar{\beta}\eta(s))_{xx}-\kappa u(s)
+\gamma\bar{\beta}\left(r_{0x}u(s)+\frac{r_{0}}{2}u_{x}(s)\right)
\Bigg].
\end{multline}
Here, the operator $A$ is given by
\begin{equation}\nonumber
Af=\left[1-\bar{\alpha}r_{0xx}-\frac{(4\bar{\alpha}+r_{0})r_{0}}{8}\partial_{x}^{2}\right]f.
\end{equation}
\end{remark}

We will use the exponentially weighted Wiener algebras as our spaces $B_{\rho};$ given $\rho\geq0,$ we say $f\in B_{\rho}$
if and only if 
\begin{equation}\nonumber
\|f\|_{\rho}=\sum_{k\in\mathbb{Z}}e^{\rho|k|}|\hat{f}_{k}|<\infty,
\end{equation}
where $\{\hat{f}_{k}\}$ are the Fourier coefficients of $f.$  These spaces satisfy \eqref{rhoConditions}.  Furthermore, these
spaces are Banach algebras, so that if $f\in B_{\rho}$ and $g\in B_{\rho},$ then $fg\in B_{\rho},$ with
\begin{equation}\label{algebraEstimate}
\|fg\|_{\rho}\leq \|f\|_{\rho}\|g\|_{\rho}.
\end{equation}
These spaces also have the Cauchy estimate; if $0\leq\rho'<\rho,$ then for all $f\in B_{\rho},$
\begin{equation}\label{cauchyEstimate}
\|\partial_{x}f\|_{\rho'}\leq\frac{e}{\rho-\rho'}\|f\|_{\rho}.
\end{equation}

Now, we move on to show that abstract Cauchy-Kowalevski theorem applies to our system \eqref{originalEtaEquation},
\eqref{originalUEquation}, by showing that $F_{1},$ $F_{2}$ given in \eqref{f1}, \eqref{f2} satisfy the hypothesis {\bf{(H1)-(H3)}}. 
Before verifying the hypotheses {\bf(H1)-(H3)}, 
it is important to understand the action of the operator $A^{-1}$ on the scale of spaces
$B_{\rho};$ we consider this inverse operator next in Section \ref{inverseSection}, and then verify {\bf(H1)-(H3)} in Section 
\ref{verificationSection}.
In bounding the inverse operator,
we necessarily make some assumptions on the function $r_{0}.$  We will show that the assumptions on $r_{0}$ 
may be satisfied by furnishing a family of examples in Section \ref{exampleSection} below.  

\subsection{The inverse operator}\label{inverseSection}

In the analysis of Section \ref{wellPosednessSection} 
above, we used the fact that $(1-\partial_{x}^{2})^{-1}$ is a bounded linear operator from 
$H^{s}$ to $H^{s+2};$ it of course is also a bounded linear operator from $B_{\rho}$ to itself, for any $\rho.$  
The inverse operator we must
deal with, however, is more complicated than $(1-\partial_{x}^{2})^{-1},$ as we now have to account for non-constant coefficients.

We begin by rewriting $A$ to factor out the function multiplying $\partial_{x}^{2},$
\begin{equation}\nonumber
A=g_{1}\left[g_{2}-\partial_{x}^{2}\right],
\end{equation}
where the functions $g_{1}$ and $g_{2}$ are given by
\begin{equation}\nonumber
g_{1}=\frac{(4\bar{\alpha}+r_{0})r_{0}}{8},\qquad g_{2}=\frac{8(1-\bar{\alpha}r_{0xx})}{(4\bar{\alpha}+r_{0})r_{0}}.
\end{equation}
We make the following assumptions on $g_{1}$ and $g_{2}$ (of course, these are really assumptions about the function $r_{0}$).\\

\noindent{\bf (H4)}  We assume $g_{1}>0$ and there
exists $\rho_{0}>0$ and $c_{0}>0$ such that $\partial_{x}^{j}g_{1}^{-1}\in B_{\rho_{0}}$ for $j\in\{0,1,2\},$
$g_{2}\in B_{\rho_{0}},$ and 
\begin{equation}\label{inverseNormAssumption}
\left\|\frac{g_{2}-c_{0}}{c_{0}}\right\|_{\rho_{0}}<1.
\end{equation}

We will focus now on inverting $g_{2}-\partial_{x}^{2}.$  
We make the decomposition $g_{2}-\partial_{x}^{2}=A_{1}+A_{2},$ where
\begin{equation}\nonumber
A_{1}=g_{2}-c_{0},\qquad A_{2}=c_{0}-\partial_{x}^{2}.
\end{equation}
As we are interested in $(A_{1}+A_{2})^{-1},$ we invert the identity $A_{1}+A_{2}=(1+A_{1}A_{2}^{-1})A_{2}$ to find the formula
\begin{equation}\nonumber
(A_{1}+A_{2})^{-1}=A_{2}^{-1}(1+A_{1}A_{2}^{-1})^{-1}.
\end{equation}

It is easily verified that, for any $0\leq\rho\leq\rho_{0},$ $A_{2}^{-1}$ is bounded from $B_{\rho}$ to itself, 
with operator norm $1/c_{0}.$
This implies that $A_{1}A_{2}^{-1}$ is also bounded from $B_{\rho}$ to itself, with 
\begin{equation}\nonumber
\|A_{1}A_{2}^{-1}\|_{B_{\rho}\rightarrow B_{\rho}}
\leq
\left\|\frac{g_{2}-c_{0}}{c_{0}}\right\|_{\rho}
\leq
\left\|\frac{g_{2}-c_{0}}{c_{0}}\right\|_{\rho_{0}}.
\end{equation}
Thus by \eqref{inverseNormAssumption}, the operator norm of $A_{1}A_{2}^{-1}$ is strictly less than $1.$  The operator
$1+A_{1}A_{2}^{-1}$ can therefore be inverted by Neumann series.  We conclude that $A^{-1}$ is well-defined as a bounded linear 
operator mapping $B_{\rho}$ to $B_{\rho}.$  We have proved the following lemma.

\begin{lemma}\label{inverseLemma}
Assume {\bf(H4)}.  For all $\rho$ satisfying $0\leq\rho\leq\rho_{0},$
 $A^{-1}$ is a well-defined bounded linear operator mapping from $B_{\rho}$ to
$B_{\rho}.$
\end{lemma}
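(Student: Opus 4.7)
The plan is to use the factorization $A = g_1 \cdot (g_2 - \partial_x^2)$ given in the text and invert the two factors separately. First, multiplication by $g_1^{-1}$ is a bounded linear operator from $B_\rho$ to itself for every $0 \leq \rho \leq \rho_0$: by hypothesis (H4), $g_1^{-1} \in B_{\rho_0} \subseteq B_\rho$, and by the Banach algebra estimate \eqref{algebraEstimate}, $\|g_1^{-1} f\|_\rho \leq \|g_1^{-1}\|_\rho \|f\|_\rho \leq \|g_1^{-1}\|_{\rho_0}\|f\|_\rho$. Thus it suffices to prove that $(g_2 - \partial_x^2)^{-1}$ is bounded on $B_\rho$, and then $A^{-1}$ will be the composition.

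For the inversion of $g_2 - \partial_x^2$ on $B_\rho$, I would follow the decomposition $g_2 - \partial_x^2 = A_1 + A_2$ with $A_1 = g_2 - c_0$ and $A_2 = c_0 - \partial_x^2$, and carry out the three short computations already outlined in the text. The key step is the explicit Fourier bound on $A_2^{-1}$: since $\widehat{A_2^{-1} f}_k = \hat f_k / (c_0 + k^2)$ with $c_0 > 0$, I get
\begin{equation*}
\|A_2^{-1} f\|_\rho \;=\; \sum_{k \in \mathbb{Z}} \frac{e^{\rho|k|}|\hat f_k|}{c_0 + k^2} \;\leq\; \frac{1}{c_0}\,\|f\|_\rho,
\end{equation*}
so $\|A_2^{-1}\|_{B_\rho \to B_\rho} \leq 1/c_0$ uniformly in $\rho \in [0,\rho_0]$. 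The operator $A_1$ is multiplication by $g_2 - c_0 \in B_{\rho_0}$, hence bounded on $B_\rho$ with norm at most $\|g_2 - c_0\|_\rho \leq \|g_2 - c_0\|_{\rho_0}$ by \eqref{algebraEstimate} and \eqref{rhoConditions}.

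Composing these two bounds gives $\|A_1 A_2^{-1}\|_{B_\rho \to B_\rho} \leq \|(g_2 - c_0)/c_0\|_{\rho_0}$, which is strictly less than $1$ by \eqref{inverseNormAssumption}. Therefore $I + A_1 A_2^{-1}$ is invertible on $B_\rho$ via the Neumann series $\sum_{n \geq 0}(-A_1 A_2^{-1})^n$, and the inverse is bounded with norm controlled uniformly in $\rho$. Assembling the pieces,
\begin{equation*}
A^{-1} \;=\; A_2^{-1}\,(I + A_1 A_2^{-1})^{-1}\,M_{g_1^{-1}},
\end{equation*}
exhibits $A^{-1}$ as a composition of bounded operators from $B_\rho$ to $B_\rho$. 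There is no serious obstacle here; the only subtle point is that the bound $1/c_0$ on $A_2^{-1}$ is independent of $\rho$, which is precisely what allows hypothesis (H4) to be phrased at the single scale $\rho_0$ and still deliver a contraction on every $B_\rho$ with $\rho \leq \rho_0$.
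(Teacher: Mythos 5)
Your proof is correct and follows essentially the same route as the paper: factor out $g_1$, split $g_2-\partial_x^2 = (g_2-c_0) + (c_0-\partial_x^2)$, bound $(c_0-\partial_x^2)^{-1}$ by $1/c_0$ via the Fourier multiplier, and invert $I + A_1A_2^{-1}$ by Neumann series under hypothesis \eqref{inverseNormAssumption}. The paper leaves some of the routine multiplier and algebra-norm checks implicit; you have simply spelled them out.
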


We also have the following corollary.
\begin{corollary}\label{inverseCorollary}
Assume {\bf(H4)}.  For any $0\leq\rho'<\rho\leq\rho_{0},$ the operators $A^{-1}\partial_{x}$ and $A^{-1}\partial_{x}^{2}$ are 
bounded from $B_{\rho}$ to
$B_{\rho'},$ with the estimates
\begin{equation}\nonumber
\|A^{-1}\partial_{x}^{j}f\|_{\rho'}\leq \frac{c\|f\|_{\rho}}{\rho-\rho'},\quad j\in\{1,2\}.
\end{equation}
\end{corollary}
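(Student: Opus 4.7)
The plan is to leverage the factorization $A = g_1(g_2 - \partial_x^2)$ used in the proof of Lemma \ref{inverseLemma}, which gives $A^{-1}h = B(g_1^{-1}h)$, where $B := (g_2-\partial_x^2)^{-1}$ is bounded on every $B_\rho$ with $\rho\le\rho_0$ (uniformly in $\rho$, from the Neumann series argument). Together with the Banach algebra property \eqref{algebraEstimate}, the fact that $g_1^{-1}\in B_{\rho_0}$, and the inclusion $\|\cdot\|_{\rho'}\leq\|\cdot\|_{\rho}$, this tells me that for any $h\in B_{\rho'}$, $\|A^{-1}h\|_{\rho'}\leq C\|h\|_{\rho'}$ with $C$ uniform in $\rho'\in[0,\rho_0]$.

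For the case $j=1$, the estimate is almost immediate. I would pick an intermediate radius $\rho''=(\rho+\rho')/2$, apply the Cauchy estimate \eqref{cauchyEstimate} to pass from $B_\rho$ to $B_{\rho''}$ with a factor $e/(\rho-\rho'')=2e/(\rho-\rho')$, and then apply the bounded operator $A^{-1}$ at level $\rho''$, followed by the norm inclusion $\|A^{-1}\partial_x f\|_{\rho'}\leq\|A^{-1}\partial_x f\|_{\rho''}$. This yields the desired $c/(\rho-\rho')$ bound.

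The case $j=2$ is the main obstacle: the naive approach of writing $\partial_x^2=\partial_x\circ\partial_x$ and iterating the Cauchy estimate would produce a $1/(\rho-\rho')^2$ bound, which is too weak. To extract the full smoothing of $A^{-1}$, I would exploit the algebraic identity
\begin{equation}\nonumber
B\partial_x^2 = -I + B\,g_2,
\end{equation}
obtained by rewriting $\partial_x^2=g_2-(g_2-\partial_x^2)$ and applying $B$ from the left. Combined with the product rule
\begin{equation}\nonumber
g_1^{-1}\partial_x^2 f = \partial_x^2(g_1^{-1}f) - 2(g_1^{-1})_x\,\partial_x f - (g_1^{-1})_{xx}\,f,
\end{equation}
this gives the decomposition
\begin{equation}\nonumber
A^{-1}\partial_x^2 f = -g_1^{-1}f + B(g_2 g_1^{-1} f) - 2B\bigl[(g_1^{-1})_x\,\partial_x f\bigr] - B\bigl[(g_1^{-1})_{xx}\,f\bigr].
\end{equation}

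Now I bound each term at level $\rho'$. The first two involve only $f$ (no derivative), so by assumption \textbf{(H4)} the factors $g_1^{-1}$, $g_2 g_1^{-1}\in B_{\rho_0}$ and Banach algebra estimate \eqref{algebraEstimate} yield bounds of the form $C\|f\|_{\rho}$. The fourth term similarly gives $C\|f\|_{\rho}$, using $(g_1^{-1})_{xx}\in B_{\rho_0}$. The third term is the only place where a derivative of $f$ appears; using $(g_1^{-1})_x\in B_{\rho_0}$, the algebra property, and the Cauchy estimate $\|\partial_x f\|_{\rho'}\le e\|f\|_{\rho}/(\rho-\rho')$, it contributes $C\|f\|_{\rho}/(\rho-\rho')$. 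Collecting, and using $\rho-\rho'\le\rho_0$ to absorb the constant-order terms into $C\rho_0/(\rho-\rho')$, I obtain $\|A^{-1}\partial_x^2 f\|_{\rho'}\le c\|f\|_{\rho}/(\rho-\rho')$, completing the proof. The key conceptual point, and the reason the non-constant coefficients do not destroy the estimate, is that every commutator term generated by $g_1^{-1}$ lives in $B_{\rho_0}$ by hypothesis \textbf{(H4)}, so it behaves as a harmless multiplier.
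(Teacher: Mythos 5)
Your proposal is correct and takes essentially the same route as the paper: the paper also factors $A^{-1} = (g_2-\partial_x^2)^{-1}g_1^{-1}$, commutes $g_1^{-1}$ past $\partial_x^2$ via the product rule, and then adds and subtracts $g_2 g_1^{-1}f$ so that the leading term collapses to $-g_1^{-1}f$, giving exactly your decomposition $A^{-1}\partial_x^2 f = -g_1^{-1}f + B(g_2 g_1^{-1}f) - 2B[(g_1^{-1})_x \partial_x f] - B[(g_1^{-1})_{xx}f]$. Your phrasing of that cancellation as the operator identity $B\partial_x^2 = -I + Bg_2$ is just a cleaner way of stating the paper's ``add and subtract,'' and your handling of $j=1$ (which the paper omits as ``simpler'') is fine, though you can get it slightly more directly by applying the Cauchy estimate once from $\rho$ to $\rho'$ and then using that $A^{-1}$, being $B$ composed with the bounded multiplier $g_1^{-1}$, is bounded on $B_{\rho'}$, with no intermediate radius needed.
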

\begin{proof} We focus on the case $j=2,$ as the other case is simpler.  
We write $A^{-1}=(g_{2}-\partial_{x}^{2})^{-1}g_{1}^{-1},$ and for $f\in B_{\rho_{0}},$ we have
\begin{multline}\nonumber
A^{-1}\partial_{x}^{2}f=(g_{2}-\partial_{x}^{2})^{-1}g_{1}^{-1}\partial_{x}^{2}f
\\
=(g_{2}-\partial_{x}^{2})^{-1}\left[(\partial_{x}^{2}(g_{1}^{-1}f)-2(\partial_{x}g_{1}^{-1})(\partial_{x}f)-(\partial_{x}^{2}g_{1}^{-1})f\right].
\end{multline}
We then add and subtract, finding
\begin{multline}\nonumber
A^{-1}\partial_{x}^{2}f=(g_{2}-\partial_{x}^{2})^{-1}\left[\left(\partial_{x}^{2}-g_{2}\right)\left(g_{1}^{-1}f\right)\right]
+(g_{2}-\partial_{x}^{2})^{-1}(g_{2}g_{1}^{-1}f)
\\
-2(g_{2}-\partial_{x}^{2})^{-1}\left((\partial_{x}g_{1}^{-1})(\partial_{x}f)\right)
-(g_{2}-\partial_{x}^{2})^{-1}\left((\partial_{x}^{2}g_{1}^{-1})f\right).
\end{multline}
The first term on the right-hand side simplifies, and this becomes
\begin{multline}\nonumber
A^{-1}\partial_{x}^{2}f=-g_{1}^{-1}f
+(g_{2}-\partial_{x}^{2})^{-1}(g_{2}g_{1}^{-1}f)
\\
-2(g_{2}-\partial_{x}^{2})^{-1}\left((\partial_{x}g_{1}^{-1})(\partial_{x}f)\right)
-(g_{2}-\partial_{x}^{2})^{-1}\left((\partial_{x}^{2}g_{1}^{-1})f\right).
\end{multline}
There are four terms on the right-hand side, three of which involve zero derivatives of $f$ and one of which involves $\partial_{x}f.$
All operators applied here either to $f$ or to $\partial_{x}f$ are bounded, and we may then use the inequalities
\eqref{rhoConditions} and \eqref{cauchyEstimate} to reach the conclusion.
\end{proof}

As we have said, we will discuss functions $r_{0}$ which satisfy {\bf(H4}) in Section \ref{exampleSection} below.

\subsection{Verifying the hypotheses}\label{verificationSection}

We are now in a position to state our existence theorem for the initial value problem for the system \eqref{originalEtaEquation},
\eqref{originalUEquation}.

\begin{thm}\label{secondMain}
Assume that $r_{0}$ satisfies {\bf(H4)}.  Furthermore assume $r_{0},$ $r_{0x},$ $\bar{\beta},$ and $\bar{\beta}_{x}$ are all
in $B_{\rho_{0}}.$  Let $\eta_{0}\in B_{\rho_{0}}$ and $u_{0}\in B_{\rho_{0}}$ be given.
Then there exists $T>0$ such that there exists a solution $(\eta,u)$ of the initial value problem \eqref{originalEtaEquation},
\eqref{originalUEquation} with initial conditions $\eta(\cdot,0)=\eta_{0},$ $u(\cdot,0)=u_{0},$ on the time interval $[0,T).$
At each time $t\in[0,T],$ each of $\eta(\cdot,t)$ and $u(\cdot,t)$ belong to the space $B_{\rho}$ for all 
$0\leq\rho<\rho_{0}\left(1-\frac{t}{T}\right).$
\end{thm}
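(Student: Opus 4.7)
The plan is to apply the abstract Cauchy-Kowalevski theorem (Theorem \ref{CKtheorem}) with the scale of Banach spaces $\{B_\rho\}$ being the exponentially weighted Wiener algebras already introduced. I would recast the initial value problem for \eqref{originalEtaEquation}, \eqref{originalUEquation} in the integral form \eqref{kanoNishidaProblem} by taking $z_{0}(t)=(\eta_{0},u_{0})$ (constant in $t$) and letting $F=(F_{1},F_{2})$ be defined by \eqref{f1}, \eqref{f2}. Because $F$ has no explicit $t$ or $s$ dependence beyond what enters through the unknown, differentiating the resulting integral equation in $t$ recovers exactly the PDE system. The stated conclusion about the shrinking radius of analyticity $\rho<\rho_{0}(1-t/T)$ will then follow from the reformulation observed in the remark immediately after Theorem \ref{CKtheorem}, by setting $T=\lambda\rho_{0}$.

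Hypothesis \textbf{(H2)} is essentially free: every term in \eqref{f1} and \eqref{f2} contains a factor of $\eta$ or $u$, so $F((0,0),t,s)\equiv 0$ and the bound holds trivially. For \textbf{(H1)}, continuity of $F$ from a ball of $B_{\rho}$ into $B_{\rho'}$ reduces to two ingredients: the Banach algebra estimate \eqref{algebraEstimate} controls all products of the unknowns with themselves and with the fixed coefficients $r_{0},r_{0x},r_{0xx},\bar\beta,\bar\beta_{x}$, which by hypothesis all lie in $B_{\rho_{0}}$; and Lemma \ref{inverseLemma} together with Corollary \ref{inverseCorollary} ensures that $A^{-1}$, $A^{-1}\partial_{x}$, and $A^{-1}\partial_{x}^{2}$ all act in a controlled way between the appropriate spaces.

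The crux is \textbf{(H3)}. Given $z=(\eta,u)$ and $\tilde z=(\tilde\eta,\tilde u)$ in a ball of $B_{\rho}$, I would decompose each nonlinear difference in the standard way; for instance $\eta u_{x}-\tilde\eta\tilde u_{x}=(\eta-\tilde\eta)u_{x}+\tilde\eta(u_{x}-\tilde u_{x})$. In $F_{1}$, which contains only first-order spatial derivatives of the unknowns, the Cauchy estimate \eqref{cauchyEstimate} produces the factor $1/(\rho-\rho')$ required by the hypothesis, and \eqref{algebraEstimate} handles the remaining products (with the coefficient $r_{0x}$ absorbed as a $B_{\rho_{0}}$-multiplier). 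In $F_{2}$, the same splitting is combined with Corollary \ref{inverseCorollary}: terms such as $A^{-1}\partial_{x}[\bar\beta(\eta-\tilde\eta)]$ and $A^{-1}\partial_{x}^{2}[\bar\beta(\eta-\tilde\eta)]$ are bounded directly by the corollary, while the quadratic term $A^{-1}(uu_{x}-\tilde u\tilde u_{x})$ is treated either by writing $uu_{x}=\tfrac{1}{2}(u^{2})_{x}$ and applying $A^{-1}\partial_{x}$, or by splitting first and then applying $A^{-1}\partial_{x}$ to each piece.

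The main obstacle is not any single estimate but the bookkeeping: $F_{2}$ contains several terms in which the variable-coefficient operator $A^{-1}$ acts on products carrying up to two derivatives, and each must be reorganized into a form to which Corollary \ref{inverseCorollary} directly applies, with the derivative loss always absorbed into a single $1/(\rho-\rho')$ factor. Once \textbf{(H1)}-\textbf{(H3)} have been verified with a constant $C$ depending only on the size of the ball, Theorem \ref{CKtheorem} produces a unique continuous solution of \eqref{kanoNishidaProblem} in $B_{\rho}$ for all $|t|<\lambda(\rho_{0}-\rho)$. Taking $T=\lambda\rho_{0}$ and reading off the range of admissible $\rho$ at a given $t$ yields precisely the statement that $(\eta(\cdot,t),u(\cdot,t))\in B_{\rho}$ for every $0\leq\rho<\rho_{0}(1-t/T)$, which completes the proof.
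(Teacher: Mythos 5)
Your proposal follows essentially the same route as the paper's proof: recast the problem in the form \eqref{kanoNishidaProblem} with $F=(F_1,F_2)$, note \textbf{(H1)} and \textbf{(H2)} are immediate, and verify \textbf{(H3)} via the standard add-and-subtract decompositions combined with the algebra estimate \eqref{algebraEstimate}, the Cauchy estimate \eqref{cauchyEstimate}, Lemma \ref{inverseLemma}, and Corollary \ref{inverseCorollary} (including the same $uu_x=\tfrac12(u^2)_x$ trick for the quadratic term). This matches the paper's argument; no gaps.
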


\begin{proof}
With the estimates we have established, it is immediate that $F=(F_{1},F_{2})$ maps $B_{\rho}$ to $B_{\rho'},$ for 
$0<\rho'<\rho<\rho_{0}.$ This establishes {\bf(H1)}.  Next, clearly {\bf(H2)} is automatically satisfied, as $F((0,0),t,s)=0.$
What remains, then, is to establish {\bf(H3)}.

To begin to verify {\bf(H3)}, we consider $F_{1}((\eta,u),t,s)-F_{1}(\tilde{\eta},\tilde{u}),t,s),$
\begin{multline}\nonumber
      \norm{-\frac{r_0}{2}(u_x-\tilde{u}_x) - \frac{1}{2}(\eta u_x - \tilde{\eta} \tilde{u}_x) - (\eta_xu -\tilde{\eta}_x\tilde{u})}_{\rho'}
    \\
     \leq \norm{-\frac{r_0}{2}(u_x-\tilde{u}_x)}_{\rho'} +\norm{\frac{1}{2}(\eta u_x - \tilde{\eta} \tilde{u}_x)}_{\rho'}
     +\norm{-r_{0x}(u-\tilde{u})}_{\rho'}
     + \norm{\eta_xu -\tilde{\eta}_x\tilde{u}}_{\rho'}
\\     \leq I + II + III+IV.
\end{multline}
The first term, $I,$ is readily bounded using the Cauchy estimate \eqref{cauchyEstimate},
\begin{equation}\nonumber
    I \leq  \frac{c\norm{u-\tilde{u}}_{\rho}}{\rho-\rho'}.
\end{equation}
For the second term, $II,$ we add and subtract, and use the algebra property \eqref{algebraEstimate},
\begin{multline}\nonumber
    II \leq {\frac{1}{2}}\norm{\eta u_x - \tilde{\eta}u_x + \tilde{\eta}u_x -\tilde{\eta}\tilde{u}_x}_{\rho'}
    \\
    \leq {\frac{1}{2}} \norm{u_x}_{\rho'}\norm{\eta-\tilde{\eta}}_{\rho'} + {\frac{1}{2}}\norm{\tilde{\eta}}_{\rho'}\norm{u_x -\tilde{u}_x}_{\rho'}. 
\end{multline}
We may then apply the Cauchy estimate \eqref{cauchyEstimate}, finding
\begin{equation}\nonumber
    II \leq c \cdot \frac{\norm{\eta - \tilde{\eta}}_{\rho}+\norm{u-\tilde{u}}_{\rho}}{\rho-\rho'}. 
\end{equation}
The third and fourth terms, $III+IV,$ may be estimated similarly, using the assumption on $r_{0x}$ for the estimate for $III,$
\begin{equation}\nonumber
    III+IV
    \leq c\cdot \frac{\norm{\eta-\tilde{\eta}}_{\rho}+\norm{u-\tilde{u}}_{\rho}}{\rho-\rho'}.
\end{equation}

We now consider $F_{2}.$  Specifically, we estimate $F_{2}((\eta,u),t,s)-F_{2}((\tilde{\eta},\tilde{u}),t,s):$
\begin{equation}\nonumber
\|F_{2}((\eta,u),t,s)-F_{2}((\tilde{\eta},\tilde{u}),t,s)\|_{\rho'}
\leq V + VI + VII+VIII+IX+X,
\end{equation}
where
\begin{equation}\nonumber
V=\left\|A^{-1}\partial_{x}(\bar{\beta}(\eta-\tilde{\eta}))\right\|_{\rho'},
\end{equation}
\begin{equation}\nonumber
VI=\|A^{-1}(uu_{x}-\tilde{u}\tilde{u}_{x})\|_{\rho'},
\end{equation}
\begin{equation}\nonumber
VII=\left\|A^{-1}\left(\frac{(3\bar{\alpha}+r_{0})r_{0x}}{2}\partial_{x}^{2}(\bar{\beta}(\eta-\tilde{\eta}))\right)\right\|_{\rho'},
\end{equation}
\begin{equation}\nonumber
VIII=\kappa\|A^{-1}(u-\tilde{u})\|_{\rho'},
\end{equation}
\begin{equation}\nonumber
IX=\gamma\|A^{-1}(\bar{\beta}r_{0x}u)\|_{\rho'},
\end{equation}
\begin{equation}\nonumber
X=\frac{\gamma}{2}\|A^{-1}(\bar{\beta}r_{0}\partial_{x}(u-\tilde{u}))\|_{\rho'}.
\end{equation}
We will omit some details, but each of $V,$ $VI,$ $VII,$ $VIII,$ $IX,$ and $X$ is bounded appropriately. 
We will demonstrate the estimate for a few terms, specifically for $V,$ $VI,$ and $X.$  The remaining terms are similar.

By Corollary \ref{inverseCorollary} and \eqref{algebraEstimate}, and by assumption on $\bar{\beta},$ we have
\begin{equation}\nonumber
    V \leq \frac{c\|\bar{\beta}\|_{\rho}\|\eta_{x}-\tilde{\eta}_{x}\|_{\rho}}{\rho-\rho'}
    \leq \frac{c\|\eta_{x}-\tilde{\eta}_{x}\|_{\rho}}{\rho-\rho'}.
\end{equation}
For $VI,$ we notice $uu_x = \frac{1}{2}\partial_x(u^2)$. Thus, we may write
\begin{equation}\nonumber
    VI = \frac{1}{2}\norm{A^{-1}\partial_{x}(u^2-\tilde{u}^2)}_{\rho'}.
\end{equation}
We then use Corollary \ref{inverseCorollary}, finding
\begin{equation}\nonumber
    VI
    \leq \frac{c\norm{u^{2}-\tilde{u}^{2}}_{\rho}}{\rho-\rho'}.
\end{equation}
Application of the algebra property \eqref{algebraEstimate} then yields
\begin{equation}\nonumber
VI\leq \frac{c\|u-\tilde{u}\|_{\rho}}{\rho-\rho'}.
\end{equation}
The final term for which we will provide details is $X.$  We write 
\begin{equation}\nonumber
\bar{\beta}r_{0}(u_{x}-\tilde{u}_{x})=\partial_{x}\left(\bar{\beta}r_{0}(u-\tilde{u})\right)-(\bar{\beta}r_{0})_{x}(u-\tilde{u}),
\end{equation} 
and we bound $X$ as
\begin{equation}\nonumber
X\leq c\|A^{-1}\partial_{x}(\bar{\beta}r_{0}(u-\tilde{u}))\|_{\rho'}
+c\|A^{-1}((\bar{\beta}r_{0})_{x}(u-\tilde{u}))\|_{\rho'}=X_{1}+X_{2}.
\end{equation}
We use Corollary \ref{inverseCorollary}, the algebra property \eqref{algebraEstimate}, and our assumptions on 
$r_{0}$ and $\bar{\beta}$ to bound $X_{1},$ finding
\begin{equation}\nonumber
X_{1}\leq \frac{c\left\|\bar{\beta}r_{0}(u-\tilde{u})\right\|_{\rho}}{\rho-\rho'}\leq\frac{c\|u-\tilde{u}\|_{\rho}}{\rho-\rho'}.
\end{equation}
For $X_{2},$ we use Lemma \ref{inverseLemma}, the algebra property \eqref{algebraEstimate}, and our assumptions on
$r_{0}$ and $\bar{\beta}$ to find
\begin{equation}\nonumber
X_{2}\leq c\|(\bar{\beta}r_{0})_{x}(u-\tilde{u})\|_{\rho'}
\leq 
c\|u-\tilde{u}\|_{\rho}
\leq
\frac{c\|u-\tilde{u}\|_{\rho}}{\rho-\rho'}.
\end{equation}
The remaining terms are similar.
This concludes the proof.
\end{proof}

\subsection{A family of examples}\label{exampleSection}

Of course we wish to show that the set of functions which satisfy {\bf(H4)} is nonempty; it is trivially nonempty since
constant functions $r_{0}$ satisfy it.  Going further, we wish to show that there are also non-constant functions which 
satisfy {\bf(H4)}.  To this end 
we now demonstrate a simple family of functions $r_{0}$ which satisfy {\bf(H4)}. 

 Let $R_{0}>0;$ we consider
$r_{0}=R_{0}+\varepsilon\sin(x),$ for sufficiently small $\varepsilon.$  
First, clearly, for sufficiently small $\varepsilon,$ we have $g_{1}>0,$ as required.  Second, we see that 
for any $\rho\geq0,$ the function $8(1-\bar{\alpha})r_{0xx}$ is in $B_{\rho}.$

We next demonstrate that there exist values of $\rho>0$ such that $\frac{1}{r_{0}}\in B_{\rho}.$  We denote $\psi=\frac{1}{r_{0}},$
and let the Fourier coefficients of $\psi$ be denoted as $\hat{\psi}_{k}.$  We adapt this argument from the proof of Theorem IX.13 in
\cite{reedSimon}.

Clearly, $\psi$ has analytic extension to a strip of width $N>0$ in the complex plane, 
for some $N>0$ (we can even explicitly calculate
this $N$ if so desired); we call this extension $\tilde{\psi}.$  
For any given $\rho$ such that $0<\rho<N,$ we denote by $\psi_{\rho}$ the function such that $\psi_{\rho}(x)=\tilde{\psi}(x+i\rho),$
and we denote its Fourier coefficients as $\hat{\psi}_{\rho k}.$  Since $\psi_{\rho}$ is a bounded function on the torus, of course
there exists $C>0$ such $|\hat{\psi}_{\rho k}|\leq C.$  By the Cauchy Integral Theorem, 
we have $\hat{\psi}_{k}=e^{-\rho k}\hat{\psi}_{\rho k}.$
Thus, for $k\geq 0,$ we have $|\hat{\psi}_{k}|\leq Ce^{-\rho |k|}.$  Negative values of $k$ can be treated similarly.
This implies that for any $0\leq\rho'<\rho,$ we have $\psi\in B_{\rho'}.$  Since $\rho$ may be taken arbitrarily close to $N,$ we
conclude that for any $\rho\in(0,N),$ we have $\psi\in B_{\rho}.$

A similar argument naturally applies to the function $\frac{1}{(4\bar{\alpha}+r_{0})},$ and to derivatives of $\frac{1}{r_{0}}.$ 
Finally, by the algebra property for the $B_{\rho}$ spaces, 
we conclude that there exists $\rho_{0}>0$ such that $\partial_{x}^{j}g_{1}^{-1}$ and $g_{2}$ are all in $B_{\rho_{0}}.$   

Next we consider existence of the constant $c_{0}$ such that \eqref{inverseNormAssumption} holds.  Denoting 
$K_{0}=\frac{8}{(4\bar{\alpha}+R_{0})R_{0}},$ we see that as $\varepsilon\rightarrow0,$ for any $c_{0}\in(0,K_{0}),$ we have
\begin{equation}\nonumber
\left\|\frac{g_{2}-c_{0}}{c_{0}}\right\|_{\rho_{0}}\rightarrow \frac{K_{0}-c_{0}}{c_{0}}.
\end{equation}
As long as $c_{0}\in (0,\frac{K_{0}}{2}),$ for sufficiently small $\varepsilon,$ we see that \eqref{inverseNormAssumption} holds.

We have therefore demonstrated that the set of functions $r_{0}$ satisfying {\bf(H4)} is nontrivial.  In Theorem \ref{secondMain},
we also made the further assumption that $r_{0},$ $r_{0x},$ $\bar{\beta},$ and $\bar{\beta}_{x}$ are all in $B_{\rho_{0}}.$  Clearly
these properties hold as well (recall that $\bar{\beta}$ is proportional to $\psi^{2}$) for our family
of examples.

\section{Existence of periodic traveling waves}\label{travelingSection}

In this section, we establish the existence of periodic traveling waves for the system \eqref{etaEquation}, \eqref{uEquation}.
We will do this in the case $\kappa=\gamma=0.$  We prove existence by means of the following local bifurcation theorem
\cite{zeidler}:

\begin{thm}[Bifurcation Theorem]\label{bifurcationTheorem}
Let $\mathcal{H}'$ and $\mathcal{H}$ be Hilbert spaces, and let $(\eta_0,u_0) \in \mathcal{H}'$. Let $U$ be an open neighborhood of $(\eta_0,u_0)$ in $\mathcal{H}'$. Suppose\\
\textbf{(B1)} The map $\phi$ : $U\times\mathbb{R} \rightarrow \mathcal{H}$ is $C^2$.\\
\textbf{(B2)} For all $c \in \mathbb{R}$, $\phi((\eta_0,u_0),c) = 0$.\\
\textbf{(B3)} For some $c_0$, $L(c_0):= \partial_{(\eta,u)}\phi((\eta_0,u_0),c)$ has a one-dimensional kernel and has zero Fredholm index.\\ 
\textbf{(B4)} If $h' \in \mathcal{H}'$ spans the kernel of $L(c_0)$ and $h^* \in \mathcal{H}$ spans the kernel of $L^*(c_0)$, then $\left<h^*, \partial_c L(c_0)h'\right>_{\mathcal{H}} \neq 0.$\\
If these four conditions hold, then there exists a sequence $\{(\eta_n, u_n),c_n\}_{n\in\mathbb{N}}\subset \mathcal{H}'\times\mathbb{R}$ with\\
a. $\lim_{n\rightarrow\infty} {((\eta_n,u_n),c_n)} = ((\eta_0,u_0),c_0)$.\\
b. $(\eta_n,u_n) \neq (\eta_0,u_0)$ for all $n \in \mathbb{N}$ and\\
c. $\phi((\eta_n,u_n,)c_n) = 0.$
\end{thm}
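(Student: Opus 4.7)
The plan is to prove Theorem \ref{bifurcationTheorem} by the classical Lyapunov–Schmidt reduction, the same approach used in \cite{zeidler} for ``bifurcation from a simple eigenvalue'' results of Crandall–Rabinowitz type. The idea is to split the equation $\phi((\eta,u),c) = 0$ into an infinite-dimensional piece that can be solved with the implicit function theorem and a one-dimensional scalar piece in which the bifurcation actually occurs; the transversality hypothesis \textbf{(B4)} will guarantee that this scalar equation has a nontrivial solution curve through the point $((\eta_0,u_0), c_0)$.

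Concretely, I would first decompose both spaces according to the Fredholm structure dictated by \textbf{(B3)}. Since $\ker L(c_0) = \mathrm{span}\{h'\}$ is one-dimensional and $L(c_0)$ has Fredholm index zero, orthogonality in the Hilbert space $\mathcal{H}$ yields $\mathcal{H} = \mathrm{range}\, L(c_0) \oplus \mathrm{span}\{h^*\}$, where $h^*$ spans $\ker L^*(c_0)$. Let $P$ be the orthogonal projection of $\mathcal{H}$ onto $\mathrm{range}\, L(c_0)$, and pick a closed complement $Y$ of $\mathrm{span}\{h'\}$ in $\mathcal{H}'$. Writing candidates as $(\eta,u) = (\eta_0,u_0) + s h' + y$ with $s \in \mathbb{R}$, $y \in Y$, the equation $\phi = 0$ becomes the pair
\begin{equation}\nonumber
P\phi\bigl((\eta_0,u_0) + sh' + y,\ c\bigr) = 0, \qquad \bigl\langle h^*,\ \phi\bigl((\eta_0,u_0) + sh' + y,\ c\bigr)\bigr\rangle_{\mathcal{H}} = 0.
\end{equation}
The derivative of the first equation in $y$ at the base point is $PL(c_0)|_Y$, which is a bijection $Y \to \mathrm{range}\, L(c_0)$. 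The implicit function theorem then produces a unique $C^2$ map $y = y(s,c)$ near $(0,c_0)$ solving it, and \textbf{(B2)} forces $y(0,c) \equiv 0$.

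Substituting back into the scalar equation, set
\begin{equation}\nonumber
g(s,c) = \bigl\langle h^*,\ \phi\bigl((\eta_0,u_0) + s h' + y(s,c),\ c\bigr)\bigr\rangle_{\mathcal{H}}.
\end{equation}
Because $g(0,c) \equiv 0$ we may write $g(s,c) = s\,\tilde g(s,c)$ with $\tilde g$ of class $C^1$. Using $\partial_s y(0,c_0) = 0$ (obtained by differentiating $P\phi = 0$ in $s$ at the base point and inverting $PL(c_0)|_Y$) and $\partial_c y(0,c_0) = 0$ (from $y(0,c)\equiv 0$), a direct Taylor expansion gives $\tilde g(0,c_0) = \langle h^*, L(c_0) h'\rangle_{\mathcal{H}} = 0$ and, after identifying $\partial_c\tilde g(0,c_0)$ with $\partial_s\partial_c g(0,c_0)$, the value $\partial_c \tilde g(0,c_0) = \langle h^*, \partial_c L(c_0)\, h'\rangle_{\mathcal{H}}$, which is nonzero by \textbf{(B4)}. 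A second application of the implicit function theorem to $\tilde g(s,c) = 0$ then produces a $C^1$ curve $c = c(s)$ with $c(0) = c_0$; taking $s_n = 1/n$, $c_n = c(s_n)$, and $(\eta_n,u_n) = (\eta_0,u_0) + s_n h' + y(s_n,c_n)$ yields the desired sequence.

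The main obstacle, and the step requiring the most care, is the bookkeeping that justifies the factorization $g = s\tilde g$ with $\tilde g$ smooth enough to feed back into the implicit function theorem, together with the identification $\partial_c\tilde g(0,c_0) = \langle h^*, \partial_c L(c_0) h'\rangle_{\mathcal{H}}$. This relies crucially on using \textbf{(B2)} to kill the contributions from $\partial_c y(0,c_0)$ and from $\partial_c\phi((\eta_0,u_0),c)$ in the expansion, and on the orthogonality $h^* \perp \mathrm{range}\, L(c_0)$ to eliminate the residual $L(c_0)$-terms. Once this algebra is in place, the existence of the bifurcating branch is immediate from the implicit function theorem.
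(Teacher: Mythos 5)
The paper does not prove Theorem \ref{bifurcationTheorem}; it is quoted verbatim from \cite{zeidler} and used as a black box, so there is no in-paper proof to compare against. Your Lyapunov--Schmidt argument is precisely the classical Crandall--Rabinowitz proof found in that reference, and the key steps you identify are all correct: the Fredholm decomposition $\mathcal{H}=\mathrm{range}\,L(c_0)\oplus\mathrm{span}\{h^*\}$, the implicit-function-theorem solution $y(s,c)$ of the projected equation with $y(0,c)\equiv0$ and $\partial_s y(0,c_0)=0$, the factorization $g(s,c)=s\tilde g(s,c)$ with $\tilde g\in C^1$ thanks to \textbf{(B1)}, and the identification $\partial_c\tilde g(0,c_0)=\langle h^*,\partial_c L(c_0)h'\rangle_{\mathcal{H}}$ using \textbf{(B2)} and the orthogonality $h^*\perp\mathrm{range}\,L(c_0)$ to eliminate the spurious terms.
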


We will use Theorem \ref{bifurcationTheorem} to prove the following theorem:
\begin{thm}\label{travelingWaveExistenceTheorem}
There exists a non-zero sequence  $\{(\eta_{n}(x,t),u_{n}(x,t))\}_{n\in\mathbb{N}}$ such that for all $n,$ for all $t,$
$\eta_{n}\in H^{1}(\mathbb{T}),$ $u_{n}\in H^{3}(\mathbb{T}),$ 
and there exists a sequence of real numbers $c_{n}$ such that for all $n,$ 
the functions $(\eta_{n},u_{n})$ constitute 
a nontrivial traveling wave solution of \eqref{etaEquation}, \eqref{uEquation} with speed $c_{n}.$
There exists $c_{\infty}$ such that as $n\rightarrow\infty,$ $c_{n}\rightarrow c_{\infty}$ and $(\eta_{n},u_{n})\rightarrow(0,0).$
Furthermore, at each time, each of $\eta_{n}$ and $u_{n}$ are even with zero mean.
\end{thm}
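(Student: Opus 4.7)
The plan is to verify the hypotheses (B1)--(B4) of Theorem \ref{bifurcationTheorem} for the bifurcation problem posed by the traveling wave equations. First I would substitute the ansatz $\eta(x,t)=\eta(x-ct)$, $u(x,t)=u(x-ct)$ into \eqref{etaEquation}, \eqref{uEquation} with $\kappa=\gamma=0$, obtaining the profile equations
\begin{equation*}
\phi((\eta,u),c):=
\begin{pmatrix}
-c\eta'+\tfrac{1}{2}(r_{0}+\eta)u'+\eta' u\\[2pt]
-cu'+\bar\beta\eta'+uu'+\tfrac{(4\bar\alpha+r_{0})r_{0}}{8}\,cu'''
\end{pmatrix}=0.
\end{equation*}
I would take $\mathcal{H}'$ to be the product of the subspaces of $H^{1}(\mathbb{T})$ and $H^{3}(\mathbb{T})$ consisting of even functions with zero mean, and $\mathcal{H}$ to be the product of two copies of the subspace of $L^{2}(\mathbb{T})$ consisting of odd functions. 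This parity restriction delivers the evenness and zero-mean claims in the theorem and, more importantly, collapses a naturally two-dimensional kernel (spanned by the conjugate pair $e^{\pm ik_{0}x}$) down to one dimension, as required by (B3). Since every term on the right is odd when $(\eta,u)$ is even, and the Banach algebra property of $H^{s}(\mathbb{T})$ for $s\geq 1$ puts both components in $L^{2}$, the map $\phi$ does send $\mathcal{H}'\times\mathbb{R}$ into $\mathcal{H}$.

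Hypotheses (B1) and (B2) are routine: $\phi$ is smooth (polynomial in Sobolev functions) and $\phi((0,0),c)=0$ trivially. The core work lies in (B3). The linearization at the origin is
\begin{equation*}
L(c)(\tilde\eta,\tilde u)=
\begin{pmatrix}-c\tilde\eta'+\tfrac{r_{0}}{2}\tilde u'\\[2pt]
-c\tilde u'+\bar\beta\tilde\eta'+\tfrac{(4\bar\alpha+r_{0})r_{0}}{8}\,c\tilde u'''\end{pmatrix},
\end{equation*}
and expanding $\tilde\eta,\tilde u$ in cosines diagonalizes $L(c)$: on the $k$-th mode it acts by a $2\times 2$ matrix $M_{k}(c)$ with
\begin{equation*}
\det M_{k}(c)=c^{2}\!\left[1+\tfrac{(4\bar\alpha+r_{0})r_{0}}{8}k^{2}\right]-\tfrac{\bar\beta r_{0}}{2}.
\end{equation*}
This determinant is strictly monotone in $k$ for fixed $c>0$, so for any $k_{0}\geq 1$ there is a unique positive $c_{0}$ with $\det M_{k_{0}}(c_{0})=0$ and $\det M_{k}(c_{0})\neq 0$ for all $k\neq k_{0}$. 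Accordingly $\ker L(c_{0})$ is one-dimensional, spanned by $h'=(\tfrac{r_{0}}{2c_{0}}\cos(k_{0}x),\cos(k_{0}x))$. The $k^{2}$ term in the determinant gives a lower bound $|\det M_{k}(c_{0})|\gtrsim k^{2}$ for large $k$, so the modal inverses $M_{k}(c_{0})^{-1}$ are uniformly controlled away from $k_{0}$, whence the range of $L(c_{0})$ is closed with codimension one, and $L(c_{0})$ has Fredholm index zero.

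For (B4), the left null vector of $M_{k_{0}}(c_{0})$ is $(\bar\beta,c_{0})^{\top}$ by inspection of its rows, so the cokernel of $L(c_{0})$ is spanned by $h^{*}=(\bar\beta\sin(k_{0}x),c_{0}\sin(k_{0}x))$. A short computation of $\partial_{c}L(c_{0})h'$, using the dispersion relation to rewrite the coefficient of its second component, yields
\begin{equation*}
\langle h^{*},\partial_{c}L(c_{0})h'\rangle=\frac{\bar\beta r_{0}k_{0}}{c_{0}}\int_{0}^{2\pi}\sin^{2}(k_{0}x)\,dx\neq 0.
\end{equation*}
Theorem \ref{bifurcationTheorem} then produces a sequence of nontrivial zeros $((\eta_{n},u_{n}),c_{n})\to((0,0),c_{0})$ of $\phi$; interpreting each $(\eta_{n},u_{n})$ as the profile of a wave moving at speed $c_{n}$, and setting $c_{\infty}:=c_{0}$, yields the theorem. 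The principal difficulty is (B3): the one-dimensionality of the kernel is transparent, but the Fredholm-index-zero condition requires the uniform modal estimate just mentioned, and the reason the proof succeeds is that the cosine-basis diagonalization turns $L(c_{0})$ into a direct sum of $2\times 2$ blocks for which the needed invertibility follows directly from the structure of the dispersion relation.
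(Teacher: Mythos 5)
Your proposal is correct and reaches the same bifurcation conclusion, but it verifies \textbf{(B3)} and \textbf{(B4)} by a genuinely different route. The paper substitutes the linearized $\eta$-equation into the linearized $u$-equation to obtain the single third-order ODE \eqref{thirdOrder}, requires its characteristic cubic \eqref{cubicPolynomial} to have imaginary roots $\pm ik_{0},$ and then works with a row-reduced, upper-triangular $L(c)$ (with second row $\partial_x^3 + B^2\partial_x$) whose adjoint has kernel spanned by $h^*=(0,\sin(k_0x)),$ yielding the pairing $\frac{8\overline{\beta}k_{0}}{(4\overline{\alpha}+r_{0})c_{0}^{3}}\int\sin^2(k_0x)\,dx.$ You instead keep $L(c)$ as the raw Fr\'echet derivative of $\phi,$ diagonalize in Fourier into $2\times2$ blocks $M_k(c),$ and read the dispersion relation off $\det M_k(c);$ your $h^*$ comes from the left null vector $(\overline{\beta},c_0)^\top$ of $M_{k_0}(c_0),$ and your pairing $\frac{\overline{\beta} r_0 k_0}{c_0}\int\sin^2(k_0x)\,dx$ is likewise nonzero. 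Both are legitimate: the paper's $L$ differs from $\partial_{(\eta,u)}\phi$ by left multiplication by a smooth, invertible, $c$-dependent matrix, and such a rescaling preserves the zero set, the kernel dimension, and the transversality condition, although it changes $h^*$ and the numerical value of the pairing. Your approach gives a slightly cleaner Fredholm argument---closedness of range and codimension one fall directly out of a uniform bound on $M_k(c_0)^{-1}$ for $k\neq k_0,$ which plays the role of the explicit inverse multiplier $\hat{\Gamma}$ that the paper writes out and checks componentwise---while the paper's reduction to the scalar ODE makes the constraint $c^2<\overline{\beta}r_0/2$ and the formula \eqref{cDefinition} for $c$ especially transparent. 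The one point you should make precise is the inverse estimate against the asymmetric domain $H^1_{e,0}\times H^3_{e,0}$: the bound $\abs{\det M_k(c_0)}\gtrsim k^2$ together with the $O(k^2)$ growth of the top-left cofactor of $M_k$ does deliver the required smoothing into $H^1\times H^3,$ so this step closes up.
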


The rest of this section is the proof of Theorem \ref{travelingWaveExistenceTheorem}.
Specifically, we now demonstrate that the conditions \textbf{(B1)}, \textbf{(B2)}, \textbf{(B3)}, and \textbf{(B4)} hold for the traveling 
wave equations for \eqref{etaEquation}, \eqref{uEquation}.
This will be the content of Sections \ref{b1B2Section}, \ref{b3Section}, and \ref{b4Section}.

\subsection{The mapping, \textbf{(B1)}, and \textbf{(B2)}}\label{b1B2Section}
Part of establishing that \textbf{(B1)} holds is specifying the function spaces and the mapping to be studied.
We begin by defining the space $\mathcal{H}'.$  We consider  symmetric solutions, so we let
\begin{equation*}
    \mathcal{H}' := H^1_{e,0}\times H^3_{e,0}
\end{equation*}
where for any $s,$
\begin{equation*}
    \mathcal{H}^s_{e,0} := \left\{f \in H^s : f \ \text{is even  and} \int_{0}^{M} f(x) dx = 0\right\}.
\end{equation*}
We recall that $H^s$ indicates the spatially periodic $L^2$-based Sobolev space of index $s.$
As our choice of spaces shows, 
we will look for solutions $\eta$ and $u$ where both are even functions and have zero mean value. 

We now give the traveling wave ansatz,
\begin{equation}\nonumber
 \eta  = \eta(x-ct), \qquad  u = u(x-ct),
\end{equation}
for some $c\in\mathbb{R}.$
With this ansatz, and with parameter values $\kappa=0$ and $\gamma=0,$ then the system \eqref{etaEquation}, 
\eqref{uEquation} becomes
\begin{eqnarray}
\label{travelingEta}       -c\eta' + \frac{1}{2}r_0u' + \frac{1}{2}\eta u' + \eta'u = 0, \\
\label{travelingU}       -cu' + \overline{\beta}\eta' + uu' + \frac{c(4\overline{\alpha}+r_0)r_0}{8}u''' = 0.
  \end{eqnarray}
The mapping $\phi(\eta,u)$ is given by the left-hand sides of  \eqref{travelingEta}, \eqref{travelingU}.

The space $\mathcal{H}'$ maps to odd functions under $\phi.$ Therefore, we take the codomain $\mathcal{H}$ to be
\begin{equation*}
    \mathcal{H} := L^2_{odd}\times L^2_{odd}.
\end{equation*}

With these definitions, {\bf(B1)} and {\bf(B2)} clearly hold, with the trivial solutions being $\eta=u=0$ for any $c\in\mathbb{R}.$

\subsection{The linearized operator and {\bf(B3)}}\label{b3Section}

We linearize the system about the equilibrium $\eta=0,$  $u=0.$ 
The linearization of the $\eta$ equation is 
\begin{equation}\label{etaLinearization}
\eta_{1}' = \frac{r_0}{2c}u_{1}',
\end{equation}
which we may integrate, using the fact that our function spaces specify zero mean, finding
\begin{equation}\nonumber
 \eta_{1} =\frac{r_0}{2c}u_{1}.
\end{equation}
We turn to the $u$ equation, which linearizes as
\begin{equation}\label{uLinearization}
\frac{c(4\overline{\alpha}+r_0)r_0}{8}u_{1}''' -cu_{1}'+ \overline{\beta}\eta_{1}'  = 0.
\end{equation}
The system \eqref{etaLinearization}, \eqref{uLinearization} is our linearized system, and the left-hand sides define our
linearized operator, $L.$  To investigate the dimension of the kernel and Fredholm properties, we rearrange the equations.
Substituting $\eta_{1}'$ from \eqref{etaLinearization} in \eqref{uLinearization}, and dividing by the leading coefficient, 
we arrive at
\begin{equation}\label{thirdOrder}
  u_0'''
  +\left(\frac{8\overline{\beta}}{2c^2(4\overline{\alpha}+r_0)} - \frac{8}{(4\overline{\alpha}+r_0)r_0}\right)u_0' = 0.
\end{equation}

The third-order differential equation \eqref{thirdOrder} has characteristic polynomial
\begin{equation}\label{cubicPolynomial}
    \xi^3 +
    \left(\frac{8\overline{\beta}}{2c^2(4\overline{\alpha}+r_0)} - \frac{8}{(4\overline{\alpha}+r_0)r_0}\right)\xi  = 0.
\end{equation}
We seek periodic solutions, for a fixed periodicity $M$, i.e. solutions satisfying
\begin{equation}\nonumber
     \eta(x+M,t) = \eta(x,t),\qquad
     u(x+M,t) = u(x,t),\quad \forall x.
\end{equation}
To have such periodic solutions, the cubic polynomial in \eqref{cubicPolynomial} must have two pure imaginary roots, which
we call $\pm iB$, and one real root, which we call $D$ :
\begin{equation}\label{desiredForm}
    (r-iB)(r+iB)(r-D) = r^3 -Dr^2 +B^2r -DB^{2}.
\end{equation}
With such roots, there would be two independent spatially periodic solutions of \eqref{thirdOrder},
\begin{equation*}
v_{1}=\cos(Bx), \qquad v_{2}=\sin(Bx).
\end{equation*}
Compatibility of our spatial period, $M,$ and the wavelength require the existence of $k\in\mathbb{N}$ such that
\begin{equation}\label{MBEquation}
    \frac{2\pi k}{B} = M.
\end{equation}

We now address the question of whether the roots of the cubic polynomial are in the desired form.
We match coefficients in \eqref{desiredForm} to the coefficients of the cubic polynomial on the right-hand side of 
\eqref{cubicPolynomial} so that we can find any restrictions on the parameters. Notice that \eqref{cubicPolynomial} doesn't have an 
$r^2$ term, so $D = 0.$ Continuing, we let
\begin{equation}\label{solvedForB2}
        B^2 
    = \left(\frac{8\overline{\beta}}{2c^2(4\overline{\alpha}+r_0)} - \frac{8}{(4\overline{\alpha}+r_0)r_0}\right).
\end{equation}
We can see that we may take $B$ to be real (and positive) if
\begin{equation}\nonumber
 c^2 \ < \ \frac{\overline{\beta}r_0}{2}.
\end{equation}
Let $M$ be given; then, if $c$ satisfies \eqref{solvedForB2}, to also satisfy \eqref{MBEquation}, $c$ must be given by
\begin{equation}\label{cDefinition}
c^2 = \frac{8\overline{\beta}r_0M^2}{16M^2 +8\pi^{2} k^2(4\overline{\alpha}+r_0)r_0}.
\end{equation}
That is, given a choice of $M$, there are infinitely many values $c$ (one corresponding to each $k \in \mathbb{N}$), 
which give a nontrivial periodic kernel for $L.$  Henceforth, we will take $M=2\pi.$

Moreover, here we can define the linear operator $\partial_{(\eta,u)}\phi((\eta_0,u_0),c)$:
\begin{equation*}
    L(c)(\eta_0,u_0) = 
    \begin{pmatrix}
    \partial_x && -\frac{r_0}{2c} \partial_x \\
    0 && \partial^3_x + B^2 \partial_x 
    \end{pmatrix}
    \begin{pmatrix}
     \eta_0\\u_0
    \end{pmatrix}
\end{equation*}

Given any $k_{0}\in\mathbb{N},$ with $k_{0}\neq 0,$ with the choice $M=2\pi,$ the formula \eqref{cDefinition} for $c$ becomes
\begin{equation*}
    c_0 = \left(\frac{4\overline{\beta}r_0}{k_0^2r_0(4\overline{\alpha}+r_0)+8}\right)^{\frac{1}{2}}.
\end{equation*}
Then, the possible kernel functions of $L(c_0)$ are:
\begin{equation*}
  z_1 = \cos(k_0x) + i\sin(k_0x) \ \text{and} \ 
  z_2 = \cos(k_0x) - i\sin(k_0x).
\end{equation*}
Recall that we are considering the domain $\mathcal{H}': H^1_{e,0}\times H^3_{e,0};$ therefore, we may eliminate odd functions 
from the kernel.  As a result, we have a one-dimensional kernel of $L(c_0):$
\begin{equation*}
    \text{ker}L(c_0) = 
    \left\{
    \text{span}\{h'\}|\ h' = \begin{pmatrix}
    \frac{r_0}{2c_0}\cos(k_0x)\\
    \cos(k_0x)
    \end{pmatrix} \in \mathcal{H}'
    \right\}.
\end{equation*}

Now, we need to show that the Fredholm index of the linear operator $L(c_0)$ is zero. 
For $L(c_0)$ to be Fredholm, the following must hold:
\begin{itemize}
\item ker($L(c_0)$) is finite dimensional,
\item coker($L(c_0)$) is finite dimensional, and
\item Range($L(c_0)$) is closed.
\end{itemize}
If $L(c_{0})$ is Fredholm, the index of $L(c_{0})$ is
\begin{equation*}
    \text{Ind}(L(c_0)) = \text{dim}(\text{ker} L(c_0)) - \text{dim}(\text{coker} L(c_0)).
\end{equation*}
Notice we have already shown the first condition: the dimension of the kernel of $L(c_0)$ is one-dimensional.
We will show the dimension of the kernel of $L(c_0)$ is same as the dimension of the cokernel of $L(c_0)$, so Ind$(L(c_0))$ 
becomes zero. 

We begin by demonstrating that the kernel of the adjoint of $L(c_0)$ is one-dimensional. 
The adjoint of $L(c_{0})$ is given by
\begin{equation*}
   L^*(c_0)(\eta_{1},u_{1})
    = \begin{pmatrix}
    -\eta_1' \\
    \frac{r_0}{2c_0}\eta_1'- u_1''' 
  -{k_0}^2u_1'
    \end{pmatrix}.
\end{equation*}
Now, we will find kernel of $L^*,$ as a subset of $\mathcal{H}: L^2_{odd} \times L^2_{odd}.$ 
Notice when $\eta'_1 = 0$, we have $\eta_{1}=0.$  Then the equation $\frac{r_{0}}{2c_{0}}-u_1''' - {k_0}^2 u_1' = 0$
becomes $-u_1''' - {k_0}^2 u_1' = 0.$
This is a third-order linear differential equation with characteristic polynomial $-r^3-{k_0}^2r = 0,$ and the roots of this 
are $r=0$ and $r = \pm i{k_0}$. On our domain $\mathcal{H},$ then, the kernel becomes
\begin{equation*}
    \text{ker}L^*(c_0) 
    =\left\{ \text{span}\{h^*\} \ |\  h^* = \begin{pmatrix}
   0 \\ \sin(k_0x)
    \end{pmatrix} \in \mathcal{H}
    \right\}.
\end{equation*}
Hence, $\text{ker}(L^{*}(c_0))$ is one-dimensional. It remains to establish that this is the same as the dimension of the cokernel.

Now, we move on to prove that the range of $L(c_0)$ is closed. Recall that $L(c_0)$ maps $\mathcal{H}'$ to $\mathcal{H}$. 
We introduce the decompositions $\mathcal{H}' = X_{k_0}\oplus \tilde{X}$ and $\mathcal{H} = Y_{k_0} \oplus \tilde{Y}$, where 
$X_{k_{0}}$ and $Y_{k_{0}}$ are given by
\begin{equation}\nonumber
X_{k_{0}}=\mathrm{span}\left\{
\left(\begin{array}{c}\cos(k_{0}x)\\0\end{array}\right),
\left(\begin{array}{c}0\\ \cos(k_{0}x)\end{array}\right)
\right\},
\end{equation}
\begin{equation}\nonumber
Y_{k_{0}}=\mathrm{span}\left\{
\left(\begin{array}{c}\sin(k_{0}x)\\0\end{array}\right),
\left(\begin{array}{c}0\\ \sin(k_{0}x)\end{array}\right)
\right\},
\end{equation}
and $\tilde{X}$ and $\tilde{Y}$ are the complementary
subspaces.  The operator $L(c_0)$ maps  $X_{k_0}$ to $Y_{k_0}$ and maps $\tilde{X}$ to $\tilde{Y}$. 
We will now show  $L(c_0)\upharpoonright \tilde{X}$ is bijective. 

Since $\text{ker}(L(c_{0}))\subseteq X_{k_{0}},$ we see that $L(c_0)\upharpoonright \tilde{X}$ has only trivial kernel and is thus 
injective. We need to show it is surjective as well. 
That is, we will show 
\begin{equation}\label{toShowSurjective}
    \forall y \in \tilde{Y}, \ \exists x \in \tilde{X} \ \text{such that} \ L(c_0)x = y.
\end{equation}
To do so, we consider the inverse of $\hat{L}(c_0),$
\begin{equation*}
 \hat{\Gamma}(k)
 = \frac{1}{k^4-k^2{k_0}^2}
    \begin{pmatrix}
     -i\abs{k}^3 + i\abs{k}{k_0}^2&&\frac{r_0}{2c_0}\cdot i\abs{k}\\
     0 && i\abs{k}
    \end{pmatrix}.
\end{equation*}
Notice the denominator is nonzero when $k \neq 0$ and  $k\neq k_0;$ the remaining wavenumbers correspond to $\tilde{X}.$
To show \eqref{toShowSurjective},
we will demonstrate
\begin{equation*}
    \forall y\in\tilde{Y},\  \exists \ x\in\tilde{X} \ \text{s.t} \ {\Gamma}(y) = x.
\end{equation*}
That is, we will demonstrate that $\Gamma[\tilde{Y}]=\tilde{X}.$

We consider the four components of the operator as $\hat{\Gamma}_{ij}$, where $i,j \in \{1,2\},$
\begin{eqnarray*}
\hat{\Gamma}_{1,1}(k) 
&&= \frac{1}{i\abs{k}} ,
\\\hat{\Gamma}_{1,2}(k)
&&= \frac{-i\frac{r_0}{2c_0} }{\abs{k}^3 -\abs{k}k_{0}^{2}},
\\\hat{\Gamma}_{2,1}(k)
&&= 0,
\\\hat{\Gamma}_{2,2}(k)
&&= \frac{-i}{\abs{k}^3 -\abs{k}k_{0}^{2}}.
\end{eqnarray*}
Notice $\hat{\Gamma}_{1,1}$ is smoothing by one derivative since its symbol behaves like $k^{-1}$ when $k\rightarrow\infty,$ 
while $\hat{\Gamma}_{1,2}$ and 
$\hat{\Gamma}_{2,2}$  are smoothing by three derivatives since their symbols behave like $k^{-3}$ as $k\rightarrow\infty.$  
Of course, 
$\hat{\Gamma}_{2,1}$ is simply the zero operator. 
Furthermore, all four symbols are pure imaginary, and thus map odd functions to even functions. 
Thus, we have that the $\hat{\Gamma}_{ij}$ are bounded linear operators between the following spaces:
\begin{eqnarray*}
\hat{\Gamma}_{1,1}:  L^2_{odd} \rightarrow \mathcal{H}^1_{e,0},
\\
\hat{\Gamma}_{1,2} : L^2_{odd} \rightarrow \mathcal{H}^1_{e,0},
\\
\hat{\Gamma}_{2,1}: L^2_{odd} \rightarrow \mathcal{H}^3_{e,0},
\\
\hat{\Gamma}_{2,2}: L^2_{odd} \rightarrow \mathcal{H}^3_{e,0}.
\end{eqnarray*}
This implies that ${\Gamma}$ is a bounded  mapping from $\tilde{Y}\rightarrow \tilde{X}$. The existence of this inverse implies
$L(c_0)[\tilde{X}]=\tilde{Y}.$

The range of $L(c_{0})$ is therefore equal to $L(c_{0})[X_{k_{0}}]\oplus\tilde{Y}.$  Since $\tilde{Y}$ is closed and
$X_{k_{0}}$ is finite-dimensional, we conclude that the range of $L(c_{0})$ is closed.  This also implies that the dimension of
the cokernel of $L(c_{0})$ is equal to the dimension of $\mathrm{ker}(L^{*}(c_{0})).$  Therefore we have demonstrated
that $L(c_{0})$ is Fredholm, with Fredholm index zero.  We have now established {\bf{(B3)}}.

\subsection{Establishing {\bf (B4)}}\label{b4Section} 
Next, we will show that $\left<h^*,\partial_c L(c_0) h' \right>_{\mathcal{H}}$ is nonzero.
Taking the derivative of $L$ with respect to $c,$  and evaluating at $c_{0},$ we have: 
\begin{equation*}
    \partial_c L(c_0)
    =\begin{pmatrix}
   0 && \frac{r_0}{2c_0^2} \partial_x \\
    0 &&  \frac{-8\overline{\beta}}{c_0^3(4\overline{\alpha}+r_0)}\partial_x 
    \end{pmatrix}.
\end{equation*}
Then we apply this to $h',$ computing
\begin{equation}\nonumber
    \partial_c L(c_0) h'
    = \begin{pmatrix}
    0 && \frac{r_0}{2c_0^2} \partial_x \\
    0 && \frac{-8\overline{\beta}}{c_0^3(4\overline{\alpha}+r_0)}\partial_x 
    \end{pmatrix}
   \begin{pmatrix}
    \frac{r_0}{2c_0}\cos({k_0}x)\\
    \cos({k_0}x)
    \end{pmatrix}
    =
    \begin{pmatrix}
     -\frac{r_{0}k_{0}}{2c_{0}^{2}}\sin(k_{0}x)\\
     \frac{8\overline{\beta}k_0}{(4\overline{\alpha}+r_0)c_0^3}\sin(k_0x)
    \end{pmatrix}.
\end{equation}
Then, taking the inner product with $h^{*},$ we find
\begin{multline}\nonumber
    \left<h^*,\partial_c L(c_0) h' \right>_{\mathcal{H}}
    =
    \left< 
    \begin{pmatrix}
     0\\ \sin(k_0x)
    \end{pmatrix}
    ,
    \begin{pmatrix}
     -\frac{r_{0}k_{0}}{2c_{0}^{2}}\sin(k_{0}x)\\
     \frac{8\overline{\beta}k_0}{(4\overline{\alpha}+r_0)c_0^3}\sin(k_0x)
    \end{pmatrix}
    \right>_{\mathcal{H}}
\\
=
    \frac{8\overline{\beta}k_0}{(4\overline{\alpha}+r_0)c_0^3} \int_{0}^{M}\sin^2(k_0x)dx \ \neq  \ 0,
\end{multline}
 with this being nonzero since $\overline{\beta}$ and $k_{0}$ are nonzero.  
Hence, we have proved {\bf (B4)} and we may apply Theorem \ref{bifurcationTheorem},  completing the proof of Theorem 
\ref{travelingWaveExistenceTheorem}.

\section{Discussion}\label{discussionSection}
We mention here a few future directions for this line of analysis.  First, while we have given an argument that the system 
\eqref{originalEtaEquation}, \eqref{originalUEquation} has an ill-posed initial value problem when $r_{0}$ is non-constant,
we have not rigorously demonstrated ill-posedness.  Generally speaking, ill-posedness can be more challenging to prove than
well-posedness, and most often this is approached by demonstrating lack of continuous dependence on the initial data.  
Now that we have demonstrated a family of solutions for the general problem in Section \ref{ACKSection}, it is possible
that a detailed analysis of these solutions could yield insight into a lack of continuous dependence on the data.

There are of course also future directions regarding traveling waves.
We have proved the existence of periodic traveling waves in the case $\kappa=\gamma=0.$
This restriction on the parameters leads to a one-dimensional kernel of the
linearized operator, which is a hypothesis of Theorem \ref{bifurcationTheorem}. 
In the general case, the kernel is two-dimensional.  We have considered applying one-dimensional 
bifurcation theorems with two-dimensional kernels such as \cite{twodim2}, \cite{twodim1}, but have found that the 
conditions of these theorems are not satisfied by our system.  
Considering a genuinely two-dimensional bifurcation will be the subject of future work.

Additionally, there are other models of fluid flow in viscoelastic vessels, such as the work of \cite{bertagliaCaleffiValiani}.
Analysis of further models, and comparison of features of solutions across different models, is another direction for future work.
Asymptotic models such as the system \eqref{etaEquation}, \eqref{uEquation} should also be validated, in the sense that 
it should be proved that solutions of the model equation and solutions of the full equations (i.e. the Navier-Stokes equations)
remain close, if they begin with the appropriately scaled, equivalent initial data.  There is a long history of validation results
for model equations in free-surface fluid dynamics \cite{lannesBook}, and extending such results to the present setting will
be valuable to understand the sense in which these models are indeed a good approximation to the phenomena under 
consideration.

\section*{Acknowledgement}.
The authors are grateful to the National Science Foundation for support through grant DMS-1907684, to the second author.

\bibliography{main.bib}{}
\bibliographystyle{plain}

\end{document}